\theoremstyle{plain}
\newtheorem{thm}{Theorem}[section]
\newtheorem*{thm*}{Theorem}
\newtheorem{prop}[thm]{Proposition} 
\newtheorem{lem}[thm]{Lemma}
\theoremstyle{definition} 
\newtheorem{defn}[thm]{Definition} 
\newtheorem{exmp}[thm]{Example}
\theoremstyle{remark}
\newtheorem{rem}[thm]{Remark}
\DeclareMathOperator{\Der}{Der}
\DeclareMathOperator{\End}{End}
\DeclareMathOperator{\Hom}{Hom}
\DeclareMathOperator{\MC}{MC}
\DeclareMathOperator{\id}{id}
\newcommand{\op}{\mathrm{op}}
\newcommand{\Alg}{\mathsf{Alg}}
\newcommand{\DGA}{\mathsf{DGA}}
\newcommand{\CDG}{\mathsf{CDG}}
\newcommand{\DGMod}{\mathsf{DGMod}\textrm{-}}
\newcommand{\pcAlg}{\mathsf{pcAlg}}
\newcommand{\pcDGA}{\mathsf{pcDGA}}
\newcommand{\pcCDG}{\mathsf{pcCDG}}
\newcommand{\pcDGMod}{\mathsf{pcDGMod}\textrm{-}}
\newcommand*\lon{\nobreak \mskip6mu plus1mu \mathpunct{} \nonscript \mkern-\thinmuskip{:}\mskip2mu \relax}
\newcommand{\invlim}{\mathop{\mathpalette\varlim@{\leftarrowfill@\scriptstyle}}\nmlimits@}
\newcommand{\dirlim}{\mathop{\mathpalette\varlim@{\rightarrowfill@\scriptstyle}}\nmlimits@}
\DeclareFontFamily{U}{mathx}{\hyphenchar\font45} 
\DeclareFontShape{U}{mathx}{m}{n}{<-> mathx10}{}
\DeclareSymbolFont{mathx}{U}{mathx}{m}{n}
\DeclareMathAccent{\widecheck}{0}{mathx}{"71} % get \widecheck
\DeclareMathAccent{\widebar}{0}{mathx}{"73} % get \widebar
\title{Koszul duality for compactly generated derived categories of second kind}
\author{Ai Guan}
\email{a.guan@lancaster.ac.uk}
\author{Andrey Lazarev}
\email{a.lazarev@lancaster.ac.uk}
\address{Department of Mathematics and Statistics, Lancaster University, Lancaster LA1 4YF, UK}
\subjclass[2020]{18G80, 18M70}
\keywords{Bar construction, cobar construction, pseudocompact algebra, Maurer--Cartan twisting, model category}
\thanks{This work was partially supported by the EPSRC grant EP/N015452/1}
\begin{document}

\begin{abstract}
For any dg algebra $A$ we construct a closed model category structure on dg $A$-modules
such that the corresponding homotopy category is compactly generated by dg $A$-modules that are finitely generated and free
over $A$ (disregarding the differential). We prove that this closed model category is Quillen equivalent to the category
of comodules over a certain, possibly nonconilpotent dg coalgebra, a so-called extended bar construction of $A$. 
This generalises and complements certain aspects of dg Koszul duality for associative algebras.
\end{abstract}
\maketitle

\section{Introduction}

Koszul duality is an algebraic phenomenon that goes back to Quillen's work \cite{Qui69} on rational homotopy theory; it later manifested itself in many different contexts: operads \cite{GiK94}, deformation theory \cite{Hin01}, representation theory \cite{BGS96} and numerous others. 

The modern understanding of Koszul duality for differential graded (dg) algebras and dg modules has been formulated in~\cite{Pos11}. 
According to this formulation there is an adjunction between the categories of augmented dg algebras and conilpotent dg coalgebras, given by bar and cobar constructions, which becomes a Quillen equivalence under certain model category structures. 
The conilpotent dg coalgebra associated to an augmented dg algebra by this equivalence is called its Koszul dual; similarly the augmented dg algebra associated to a conilpotent dg coalgebra is called its Koszul dual. 
There is also a Quillen equivalence between the corresponding categories of dg modules and dg comodules. 
A variant of this correspondence exists for non-augmented dg algebras and their modules.

A salient feature of this theory is that the closed model category structures on the Koszul dual side (both for coalgebras and their comodules) are of the ``second kind'': the weak equivalences are not created in the underlying chain complexes but are of a more subtle nature (so-called \emph{filtered quasi-isomorphisms}). 

The module-comodule Koszul duality is the easiest one to prove (though still quite nontrivial), essentially because of its linear character: this is a duality between stable closed model categories whose homotopy categories are triangulated.
There are two symmetric versions of it: 
\begin{enumerate}
\item the duality between modules over a dg algebra and dg comodules over its Koszul dual conilpotent dg coalgebra and
\item the duality between comodules over a conilpotent dg coalgebra and dg modules over its Koszul dual dg algebra.
\end{enumerate}
What happens if one drops the condition of conilpotency on the coalgebra side?
The closed model structure on the category of comodules does not depend on the conilpotency assumption, \cite[Theorem 8.2]{Pos11}.
Furthermore, Positselski proves (\cite[Theorem 6.7]{Pos11}) that there is a Koszul duality between dg comodules over a possibly nonconilpotent dg coalgebra and modules over its Koszul dual dg algebra.
However, this time \emph{both} closed model structures are of the second kind: the weak equivalences on dg modules are not merely quasi-isomorphisms.
If the coalgebra happens to be conilpotent, then the duality specialises to the ordinary one: the Koszul dual dg algebra becomes cofibrant and weak equivalences of dg modules over a cofibrant dg algebra are ordinary quasi-isomorphisms.

In the present paper we construct a complementary version of Positselski's non-conilpotent Koszul duality as a Quillen equivalence between closed model categories of dg modules over a dg algebra and comodules over its ``Koszul dual'' dg coalgebra.
The difference between our version and the standard one is two-fold: firstly, the weak equivalences between dg modules are of ``second kind'' (i.e.~they are not created in the category of underlying complexes) and secondly, our ``Koszul dual'' dg coalgebra is typically much bigger than the ordinary bar construction; in particular it is \emph{not} conilpotent in general.
This extended bar construction has been considered, e.g.~in a recent paper \cite{AJ13}. 

Perhaps the most interesting feature of this correspondence is an exotic  model structure of the second kind on dg modules over a dg algebra: in the case of an ordinary algebra (or, more generally, cohomologically non-positively graded dg algebra) this structure reduces to the usual one; however in general it is different.
There are many competing inequivalent notions of weak equivalence of the second kind for dg modules over a dg algebra (as opposed to dg comodules where there is only one such notion); some of them support closed model category structures, \cite[Proposition 1.3.6]{bec14}, \cite[Theorem 8.3]{Pos11}.
Our structure is generally different from those considered in the mentioned references and characterised by its compatibility with Koszul duality.
It is, necessarily, compactly generated (since such is the category of dg comodules over any dg coalgebra, to which it is Quillen equivalent). This model structure is relevant to the study of various triangulated categories of geometric origin: coherent sheaves on complex analytic manifolds, cohomologically constant sheaves on smooth manifolds, and $D$-modules on smooth algebraic varieties.
Its prototype is contained in the paper \cite{Blo10} where the notion of a cohesive module over a dg algebra is introduced, which is essentially the same as a cofibrant object in our closed model structure.

\subsection{Notation and conventions} 

Throughout the paper, $k$ denotes a field. 
All vector spaces will be over $k$ and differential graded (dg) vector spaces are further assumed to be cohomologically $\mathbb{Z}$-graded.
Given a graded vector space $V$, its suspension $\Sigma V$ is a graded vector space with $(\Sigma V)^i = V^{i+1}$ and its dual $V^*$ is a graded vector space with $(V^*)^i = (V^{-i})^*$. 
Unadorned tensor products and Homs are assumed to be over $k$.
The category of (graded) algebras is denoted $\Alg$, the category of dg algebras is denoted $\DGA$ and the category of augmented dg algebras is denoted $\DGA^*$; all of these are also implicitly assumed to be over $k$. 

A pseudocompact vector space is a projective limit of finite-dimensional vector spaces, equipped with the inverse limit topology. 
In particular, the $k$-linear dual $V^*$ of a discrete vector space $V$ is pseudocompact, and a finite-dimensional vector space is pseudocompact if and only if it is discrete.
Given a pseudocompact vector space $V$, its dual $V^*$ is defined to be its topological dual, hence $V \cong V^{**}$ is always true. 
Given pseudocompact vector spaces $V$ and $W$, the space of morphisms $\Hom(V,W)$ is assumed to mean the space of continuous linear maps, and the tensor product $V \otimes W$ is assumed to mean the completed tensor product.
If $V = \invlim_i V_i$ is pseudocompact and $W$ is discrete, then their tensor product is defined to be $V \otimes W = \invlim_i V_i \otimes W$; note that in general this is neither discrete nor pseudocompact.
The category of (graded) pseudocompact algebras is denoted $\pcAlg$, the category of pseudocompact dg algebras is denoted $\pcDGA$ and the category of augmented pseudocompact dg algebras is denoted $\pcDGA^*$. 
These categories are anti-equivalent to the categories of (coaugmented) (dg) coalgebras via taking linear and topological duals. 

We will generally work with \emph{right} modules over dg algebras and pseudocompact dg algebras, unless stated otherwise.
Given a dg algebra $A$, the category of dg $A$-modules is denoted $\DGMod A$.
Given a pseudocompact dg algebra $C$, a pseudocompact $C$-module is a pseudocompact vector space $V$ together with a continuous linear map $V \otimes C \to V$ satisfying the usual axioms of associativity and unitality.
The category of pseudocompact dg $C$-modules is denoted $\pcDGMod C$; this category is anti-equivalent to the category of dg $C^*$-comodules, again via taking duals.
Thus, all our results concerning pseudocompact dg modules can readily be translated into results about dg comodules if one wishes to do so.
%Given a dg (pseudocompact or not) algebra $A$, we will write $A^{\op}$ for its opposite dg algebra and for a category $\mathsf C$ the symbol ${\mathsf C}^{\op}$ will stand for the category with the same objects and reversed arrows.

\subsection{Acknowledgement}
The authors would like to thank Leonid Positselski for freely sharing his expert knowledge of the subject matter and Joe Chuang for many stimulating discussions.

\section{Extended bar construction}
\label{sec:barconst}

Given an algebra $A$, its \emph{pseudocompact completion} $\widecheck{A}$ is the projective limit of the inverse system of quotients by cofinite-dimensional ideals of $A$. 
Pseudocompact completion defines a functor from $\Alg \to \pcAlg$ that is left adjoint to the functor $\pcAlg \to \Alg$ forgetting the topology. 

\begin{defn}
Let $V$ be a pseudocompact graded vector space. 
If $V$ is finite-dimensional, its \emph{pseudocompact tensor algebra} $\widecheck{T}V$ is the pseudocompact completion of the tensor algebra $TV$.
For a general pseudocompact vector space $V = \invlim_i V_i$, its \emph{pseudocompact tensor algebra} is 
\[
\widecheck{T}V \coloneqq \invlim_i{\widecheck{T}V_i}.
\]
\end{defn}

\begin{prop}\label{pcfree}
Let\/ $V$ be a pseudocompact graded vector space. 
\begin{enumerate} 
\item\label{pcfree1} The pseudocompact tensor algebra\/ $\widecheck{T}V$ is the free pseudocompact algebra on\/ $V$, that is, for any pseudocompact algebra\/ $A$ there is a bijection
\[
\Hom_{\pcAlg}(\widecheck{T}V,A) \cong \Hom(V,A).
\]
\item For any pseudocompact\/ $\widecheck{T}V$-$\widecheck{T}V$-bimodule\/ $M$ there is a bijection
\[
\Der(\widecheck{T}V,M) \cong \Hom(V,M).
\]
\end{enumerate}
\end{prop}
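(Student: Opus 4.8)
The plan is to establish the first assertion by first treating the finite-dimensional case and then passing to the inverse limit, and to deduce the second assertion from the first by the standard square-zero extension trick. First I would handle the case where $V$ is finite-dimensional, hence discrete. Here $\widecheck{T}V$ is by definition the pseudocompact completion of the ordinary tensor algebra $TV$, so composing the adjunction (pseudocompact completion being left adjoint to the forgetful functor $\pcAlg \to \Alg$) with the universal property of $TV$ in $\Alg$ gives
\[
\Hom_{\pcAlg}(\widecheck{T}V, A) \cong \Hom_{\Alg}(TV, A) \cong \Hom(V, A),
\]
where on the right the last term is a priori all $k$-linear maps from $V$ into the underlying algebra of $A$. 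The one thing to check is that, since $V$ is finite-dimensional and discrete, every such linear map is automatically continuous, so this set coincides with the space of continuous maps; this settles the finite-dimensional case.

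For general $V = \invlim_i V_i$, with $\widecheck{T}V = \invlim_i \widecheck{T}V_i$, I would construct the bijection by hand. Restriction along the canonical map $V \to \widecheck{T}V$ gives a natural map $\Hom_{\pcAlg}(\widecheck{T}V, A) \to \Hom(V, A)$, and I would build its inverse after writing $A = \invlim_j A_j$ as an inverse limit of finite-dimensional algebras. Given a continuous linear map $g \colon V \to A$, each composite $g_j \colon V \to A_j$ is a continuous map into a discrete finite-dimensional space, so its kernel is open and therefore contains the kernel of some projection $V \to V_i$; thus $g_j$ factors through $V_i$. The finite-dimensional case then extends this to a pseudocompact homomorphism $\widecheck{T}V_i \to A_j$, which I precompose with the projection $\widecheck{T}V \to \widecheck{T}V_i$; checking independence of the choice of $i$ and compatibility over $j$ assembles these into a homomorphism $\widecheck{T}V \to A$ inverse to restriction. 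Injectivity of the restriction map follows because the image of $TV$ is dense in $\widecheck{T}V$ (its image in each $\widecheck{T}V_i$ is dense), so a continuous homomorphism is determined by its values on $V$. I expect this inverse-limit step to be the main obstacle: everything hinges on the topological facts that continuous maps into discrete finite-dimensional targets factor through finite quotients and that $TV$ is dense in $\widecheck{T}V$, and these must be handled carefully for the pseudocompact completed tensor product.

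Finally, I would deduce the second assertion from the first by passing to the trivial extension. For a pseudocompact $\widecheck{T}V$-$\widecheck{T}V$-bimodule $M$, form the square-zero extension $\widecheck{T}V \ltimes M$, a pseudocompact algebra with underlying space $\widecheck{T}V \oplus M$ in which $M$ is an ideal of square zero. Derivations $D \colon \widecheck{T}V \to M$ correspond bijectively to pseudocompact algebra homomorphisms $\widecheck{T}V \to \widecheck{T}V \ltimes M$ splitting the projection $\widecheck{T}V \ltimes M \to \widecheck{T}V$, via $a \mapsto a + D(a)$, the Leibniz rule being precisely multiplicativity. By part (1) such homomorphisms are the same as continuous linear maps $V \to \widecheck{T}V \oplus M$, and the splitting condition forces the $\widecheck{T}V$-component to be the canonical inclusion, leaving exactly a continuous linear map $V \to M$. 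This yields $\Der(\widecheck{T}V, M) \cong \Hom(V, M)$, and this last step is essentially formal once part (1) is in hand.
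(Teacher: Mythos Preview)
Your proposal is correct and follows essentially the same strategy as the paper: the finite-dimensional case via the completion adjunction, the general case by passage to the inverse system, and part (2) via the square-zero extension $\widecheck{T}V \oplus M$. The only difference is presentational: where you unpack the inverse-limit step by hand (factoring each $g_j\colon V\to A_j$ through some $V_i$ and appealing to density for injectivity), the paper packages the same content into the single categorical observation that finite-dimensional algebras are cocompact in $\pcAlg$, i.e.\ $\Hom_{\pcAlg}(-,A_j)$ takes filtered limits to filtered colimits, yielding $\Hom_{\pcAlg}(\widecheck{T}V,A)\cong \invlim_j\dirlim_i\Hom(V_i,A_j)\cong\Hom(V,A)$ directly.
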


\begin{proof}\hfill
\begin{enumerate} 
\item If $V$ is finite-dimensional, then $V$ is discrete and $\Hom(V,A) \cong \Hom_{\Alg}(TV,A)$, which equals $\Hom_{\pcAlg}(\widecheck{T}V,A)$ as pseudocompact completion is left adjoint to the forgetful functor.
More generally, writing $V = \invlim_i V_i$ and $A = \invlim_j A_j$ with $V_i$ and $A_j$ finite-dimensional, we have
\begin{align*}
\Hom_{\pcAlg}(\widecheck{T}V,A) 
&\cong \invlim_j \Hom_{\pcAlg}(\widecheck{T}V,A_j) \\
&\cong \invlim_j \dirlim_i \Hom_{\pcAlg}(\widecheck{T}V_i,A_j) \\
&\cong \invlim_j \dirlim_i \Hom(V_i,A_j)
\cong \Hom(V,A).
\end{align*}
Here, the second bijection holds as finite-dimensional algebras are cocompact in $\pcAlg$, that is, for any finite-dimensional algebra $A$, the functor $\Hom_{\pcAlg}(-,A)$ takes filtered limits to filtered colimits.
\item Recall the following construction, which allows us to turn questions about derivations into question about algebra homomorphisms.
Given a graded pseudocompact algebra $A$ and an $A$-$A$-bimodule $M$ consider the pseudocompact algebra $A \oplus M$ with multiplication $(a,m)\cdot(b,n)=(ab,an+mb)$, and let $p \colon A \oplus M \to A$ be the natural projection. 
Then there is a bijection 
$\Der(A,M) \cong \{f \in \Hom_{\pcAlg}(A,A\oplus M) : p \circ f = 1_A\}$.
Setting $A = \widecheck{T}V$ and using part (\ref{pcfree1}), we have 
\[
\Der(\widecheck{T}V,M) \cong \{f \in \Hom(V,\widecheck{T}V\oplus M) : p \circ f = 1_A\}
\cong \Hom(V,M). \qedhere 
\]
\end{enumerate}
\end{proof}

\begin{rem}
The pseudocompact algebra $\widecheck{T}V$ is the $k$-linear dual to the Sweedler cofree coalgebra on the discrete vector space $V^*$, \cite[Section 6.4]{Swe69}.
\end{rem}

\begin{prop}\label{pcTVresolution}
For any pseudocompact vector space\/ $V$, there is a bimodule resolution of\/ $\widecheck{T}V$ given by
\begin{center}
\begin{tikzcd}[sep=scriptsize]
0 \rar & \widecheck{T}V \otimes V \otimes \widecheck{T}V
\rar["d"] & \widecheck{T}V \otimes \widecheck{T}V \rar["m"] & \widecheck{T}V \rar & 0
\end{tikzcd}
\end{center}
where $m$ is multiplication and $d(1 \otimes v \otimes 1) = v \otimes 1 - 1 \otimes v$.
\end{prop}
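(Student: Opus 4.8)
The plan is to recognise this sequence as the pseudocompact analogue of the standard free-bimodule resolution of a tensor algebra by noncommutative K\"ahler differentials, and to deduce exactness from the universal property of derivations just established. Abbreviate $B \coloneqq \widecheck{T}V$. The right-hand portion of the sequence is the multiplication $m \colon B \otimes B \to B$, a surjection of $B$-$B$-bimodules since $B$ is unital; its kernel is the bimodule $\Omega^1_B$ of noncommutative differentials, carrying the universal derivation $\partial \colon B \to \Omega^1_B$, $\partial(a) = a \otimes 1 - 1 \otimes a$. Thus the whole statement amounts to the assertion that $d$ identifies the free $B$-$B$-bimodule $B \otimes V \otimes B$ with $\Omega^1_B = \ker m$.

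First I would check that the displayed diagram is a complex. Surjectivity of $m$ is immediate. For $m \circ d = 0$ it suffices to evaluate on the bimodule generators $1 \otimes v \otimes 1$, where $m(v \otimes 1 - 1 \otimes v) = v - v = 0$; since both $d$ and $m$ are maps of $B$-$B$-bimodules, the composite vanishes identically. Hence $\operatorname{im} d \subseteq \ker m$, and it remains to prove that $d$ is injective with image all of $\ker m$.

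The heart of the argument is a corepresentability comparison. On one hand, the universal property of K\"ahler differentials gives, naturally in every pseudocompact $B$-$B$-bimodule $M$, a bijection between bimodule maps $\Omega^1_B \to M$ and derivations $B \to M$, via $f \mapsto f \circ \partial$. On the other hand, the free bimodule satisfies $\Hom_{B\text{-}B}(B \otimes V \otimes B, M) \cong \Hom(V, M)$ by restriction to $1 \otimes V \otimes 1$, while Proposition~\ref{pcfree}(2) identifies $\Hom(V, M)$ with $\Der(B, M)$. Composing, both $\Omega^1_B$ and $B \otimes V \otimes B$ corepresent the functor $M \mapsto \Der(B, M)$ on pseudocompact bimodules, so Yoneda produces a canonical isomorphism $B \otimes V \otimes B \xrightarrow{\sim} \Omega^1_B$. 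Tracking the universal elements shows that it sends $1 \otimes v \otimes 1$ to $\partial(v) = v \otimes 1 - 1 \otimes v$, i.e.\ it is precisely $d$. Consequently $d$ is injective and $\operatorname{im} d = \ker m$, which establishes exactness.

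The step demanding the most care is the pseudocompact bookkeeping underlying this comparison: one must verify that $\Omega^1_B = \ker m$ is itself pseudocompact and that the derivation--differential correspondence, as well as the identification $\Hom(V,M) \cong \Der(B,M)$, hold with continuous maps and completed tensor products, so that the Yoneda argument really takes place inside the abelian category of pseudocompact bimodules. A robust alternative, should the topology cause difficulty, is to treat finite-dimensional $V$ first---where the classical exact sequence for $TV$ can be completed directly---and then pass to the general case $V = \invlim_i V_i$ by taking the inverse limit, using that $\widecheck{T}V = \invlim_i \widecheck{T}V_i$ by definition together with the exactness of filtered inverse limits in the pseudocompact category.
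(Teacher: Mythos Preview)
Your proof is correct and follows essentially the same route as the paper: both identify $\ker m$ with the universal bimodule of derivations $\Omega(B)$ and then use \cref{pcfree}(2) together with the free-bimodule adjunction to conclude that $\Omega(B)\cong B\otimes V\otimes B$ via a Yoneda comparison. Your write-up is more explicit---checking that the displayed sequence is a complex, tracking the universal element to identify the Yoneda isomorphism with $d$, and flagging the pseudocompact bookkeeping---whereas the paper simply asserts the corepresentability comparison in one line.
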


\begin{proof}
We use the following well-known fact for algebras, that also holds in the pseudocompact setting. 
Let $(A,\mu)$ be a graded pseudocompact algebra. 
Then $\Omega(A) = \ker \mu$ is an $A$-$A$-bimodule and the map $\delta \colon A \to \Omega(A)$ given by $\delta(a) = a\otimes 1 - 1 \otimes a$ is a derivation. 
For any derivation $d \colon A \to M$ taking values in an $A$-$A$-bimodule $M$, there is a unique bimodule homomorphism $f \colon \Omega(A) \to M$ such that $d=f \circ \delta$; hence
\[
  \Der(A,M) \cong \Hom_{A\text{-}A}(\Omega(A),M).
\]

Now by \cref{pcfree}, $\Der(\widecheck{T}V,M) \cong \Hom(V,M) \cong \Hom_{\widecheck{T}V\text{-}\widecheck{T}V}(\widecheck{T}V\otimes V \otimes \widecheck{T}V,M)$, so $\Omega(\widecheck{T}V) \cong \widecheck{T}V \otimes V\otimes \widecheck{T}V$ as required.
\end{proof}

All our dg algebras are augmented, except in Section \ref{sec:curved}. 
The augmentation ideal of a dg algebra $A$ is denoted $\widebar{A}$.

\begin{defn}\label{dgabarcobar}
We define a pair of functors
\[
\Omega \colon (\pcDGA^*)^{\op} \rightleftarrows \DGA^* \lon \widecheck{B}
\]
as follows.
The \emph{cobar construction} associates to a pseudocompact dg algebra $C$ the dg algebra
\[
\Omega C \coloneqq T\Sigma^{-1}\widebar{C}^*
\]
with differential defined in the usual way.

The \emph{extended bar construction} associates to a dg algebra $A$ the pseudocompact dg algebra 
\[
\widecheck{B}A \coloneqq \widecheck{T}\Sigma^{-1}\widebar{A}^*.
\]
We define the differential on $\widecheck{B}A$ as follows: Let $d_1 \colon \Sigma^{-1}\widebar{A}^* \to \Sigma^{-1}\widebar{A}^*$ and $d_2 \colon \Sigma^{-1}\widebar{A}^* \to \Sigma^{-1}\widebar{A}^* \widehat{\otimes} \Sigma^{-1}\widebar{A}^*$ be induced by dualising the differential and multiplication on $A$ respectively.
For a pseudocompact vector space $V$, consider the \emph{semi-completed tensor algebra} 
$T'(V) = \bigoplus_{n \geq 1} V^{\widehat{\otimes} n}$, 
which has a topology that is neither pseudocompact nor discrete, and has the property $\Hom_{\Alg}(T'(V),B)\cong\Hom(V,B)$ for any pseudocompact algebra $B$ (see \cite[Lemma 4.5]{gua}). 
Then by \cref{pcfree}(1), the identity on $\widecheck{T}\Sigma^{-1}\widebar{A}^*$ induces a map $i \colon T'(\Sigma^{-1}\widebar{A}^*) \to \widecheck{T}(\Sigma^{-1}\widebar{A}^*)$, and we define the differential to be
\[
i \circ (d_1+d_2) \colon \Sigma^{-1}\widebar{A}^* \to T'(\Sigma^{-1}\widebar{A}^*) \to \widecheck{T}(\Sigma^{-1}\widebar{A}^*).
\]
\end{defn}

\subsection{The Maurer--Cartan functor and representability} 

Let $A$ be a dg algebra (possibly discrete, pseudocompact or otherwise). 
A \emph{Maurer--Cartan element} in $A$ is an element $x \in A$ of degree 1 such that $dx+x^2 = 0$. 
The set of all Maurer--Cartan elements in $A$ is denoted $\MC(A)$.
For any dg algebra $A$ and any pseudocompact dg algebra $C$, define $\MC(A,C) \coloneqq \MC(A \otimes C)$; this is functorial in both arguments.

\begin{prop}\label{prop:repres}
Let $A$ be an augmented dg algebra and $C$ be an augmented pseudocompact dg algebra. There are natural bijections
\[
\Hom_{\DGA^*}(\Omega C,A)\cong \MC(\widebar{A},\widebar{C}) \cong \Hom_{\pcDGA^*}(\widecheck{B}A,C).
\]
In particular, $\Omega$ is a left adjoint functor to $\widecheck{B}$.
\end{prop}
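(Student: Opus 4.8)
The plan is to prove both displayed bijections by the same mechanism and to read off the adjunction as their composite. Write $x \in (\widebar{A} \otimes \widebar{C})^1$ for a candidate Maurer--Cartan element; the whole point is that such an $x$ is precisely the data of a graded algebra homomorphism out of a free (or free pseudocompact) algebra, and that the equation $dx + x^2 = 0$ is exactly compatibility with the differential. I would treat the cobar bijection $\Hom_{\DGA^*}(\Omega C, A) \cong \MC(\widebar{A}, \widebar{C})$ first and in full, then obtain the second bijection by the evident symmetry in $A$ and $C$, replacing the universal property of the tensor algebra $T$ by that of the pseudocompact tensor algebra $\widecheck{T}$ from \cref{pcfree}(\ref{pcfree1}).

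First I would strip away the algebra structure. Since $\Omega C = T\Sigma^{-1}\widebar{C}^*$ is free as a graded algebra on $\Sigma^{-1}\widebar{C}^*$, an augmented graded algebra map $\Omega C \to A$ is the same thing as a degree-zero linear map $\Sigma^{-1}\widebar{C}^* \to \widebar{A}$ (landing in the augmentation ideal is both forced by, and forces, augmentation-preservation). Next I would identify this mapping space with $(\widebar{A}\otimes\widebar{C})^1$: desuspending turns a degree-zero map $\Sigma^{-1}\widebar{C}^*\to\widebar{A}$ into a degree-one map $\widebar{C}^*\to\widebar{A}$, and the pseudocompact duality identity $V\otimes W\cong\Hom(V^*,W)$, valid for pseudocompact $V$ (here $V=\widebar{C}$, $W=\widebar{A}$), gives $\Hom^1(\widebar{C}^*,\widebar{A})\cong(\widebar{A}\otimes\widebar{C})^1$. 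All of these identifications are manifestly natural in $A$ and $C$, so naturality of the final bijection will be automatic.

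The crux is the equivalence between differential-compatibility and the Maurer--Cartan equation. Here I would use that, because $\Omega C$ is free, a graded algebra map $f$ commutes with differentials if and only if $d_A f = f\,d_{\Omega C}$ holds on the generators $\Sigma^{-1}\widebar{C}^*$ (both sides being $f$-derivations, agreement on algebra generators suffices). Writing the cobar differential as $d_1 + d_2$, where $d_1$ is dual to the differential of $C$ and $d_2$ is dual to its multiplication, evaluation on a generator $\sigma$ yields $d_A\psi(\sigma) = \psi(d_1\sigma) + m_A(\psi\otimes\psi)(d_2\sigma)$, where $\psi$ corresponds to $f$. Under $\psi\leftrightarrow x$, the term $d_A\psi - \psi d_1$ is exactly the total differential $dx$ on $\widebar{A}\otimes\widebar{C}$, while $m_A(\psi\otimes\psi)d_2$ is exactly $x^2$ (the product of $\widebar{A}\otimes\widebar{C}$ combines $m_A$ with $m_C$, and $d_2$ is dual to $m_C$); thus the condition becomes $dx + x^2 = 0$. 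I expect the main obstacle to be the bookkeeping here: getting the Koszul signs to line up so that $d_A\psi-\psi d_1$ really is $dx$ rather than $dx$ up to a sign --- this is precisely what the sign conventions of \cref{dgabarcobar} are arranged to guarantee.

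For the second bijection the argument is formally identical with $A$ and $C$ interchanged, but one subtlety deserves care and is where I would spend effort. Since $A$ may be infinite-dimensional, the dual of its multiplication is a map $\Sigma^{-1}\widebar{A}^*\to\Sigma^{-1}\widebar{A}^*\widehat{\otimes}\Sigma^{-1}\widebar{A}^*$ into the \emph{completed} tensor product, which is exactly why $\widecheck{B}A$ is built from $\widecheck{T}$ and why the differential is defined through the semi-completed algebra $T'$ and the map $i$ of \cref{dgabarcobar}. Consequently a pseudocompact algebra map $g\colon\widecheck{B}A\to C$ is determined by a continuous map $\phi\colon\Sigma^{-1}\widebar{A}^*\to\widebar{C}$ via \cref{pcfree}(\ref{pcfree1}), and in the differential-compatibility computation the term $m_C(\phi\otimes\phi)(d_2\sigma)$ is an a priori infinite sum that converges in the pseudocompact topology of $C$; this corresponds to $x^2$, which likewise converges in $\widebar{A}\otimes\widebar{C}=\invlim_j\widebar{A}\otimes\widebar{C}_j$. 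Once both bijections are established and shown natural, composing them gives $\Hom_{\DGA^*}(\Omega C,A)\cong\Hom_{\pcDGA^*}(\widecheck{B}A,C)$, which is precisely the adjunction $\Omega\dashv\widecheck{B}$ asserted in the statement.
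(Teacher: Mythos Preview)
Your proposal is correct and follows essentially the same approach as the paper: use the universal property of the (pseudocompact) tensor algebra to identify augmented graded algebra maps with degree~$1$ elements of $\widebar{A}\otimes\widebar{C}$, and then check that compatibility with differentials is precisely the Maurer--Cartan equation. The paper treats the $\widecheck{B}$-side first and defers the computation to a reference, whereas you treat the $\Omega$-side first and spell out the $d_1$/$d_2$ decomposition and the completed-tensor-product subtlety explicitly, but the substance is the same.
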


\begin{proof}
Forgetting the differential, any map of augmented pseudocompact algebras $f \colon \widecheck{B}A \to C$ is equivalent to a linear map $\Sigma^{-1}\widebar{A}^* \to \widebar{C}$ by \cref{pcfree}, which is equivalently a degree 1 element $x \in \widebar{A} \otimes \widebar{C}$.
The condition that $f$ commutes with differentials is then equivalent to condition that $x$ satisfies the Maurer--Cartan equation; this can be proven just like the corresponding statement for the non-extended bar construction, see for example~\cite[Proposition 2.2]{cl11}. The other bijection is proved similarly.
\end{proof}

\begin{rem}
An adjoint pair of functors $(\Omega, \operatorname{B^{ext}})$ between $\DGA^*$ and $\pcDGA^*$ was defined in \cite[Section 5.3]{AJ13} in a different way; it was also proved that that these functors represent the MC sets (called twisting cochains in op.~cit.) as in Proposition \ref{prop:repres}. 
It follows that these functors are (isomorphic to) the functors $\Omega$ and $\widecheck{B}$ defined above.
\end{rem}

\section{Koszul duality for modules}
\label{sec:kd}

\subsection{Maurer--Cartan twisting}

We begin this section by recalling the notion of Maurer--Cartan twistings of dg algebras and dg modules.

\begin{defn}\label{def:twisted}
Let $(A,d_A)$ be a dg algebra and $x \in \MC(A)$.
\begin{enumerate}
\item The \emph{twisted algebra of $A$ by $x$}, denoted $A^x = (A,d^x)$, is the dg algebra with the same underlying algebra as $A$ and differential $d^x(a) = d_A(a) + [x,a]$.
\item Let $(M,d_M)$ be a \emph{left} dg $A$-module.
The \emph{twisted module of $M$ by $x$}, denoted $M^{[x]} = (M,d^{[x]})$, is the left dg $A^x$-module with the same underlying module structure as $M$ and differential $d^{[x]} (m) = d(m) + xm$.
\end{enumerate}
\end{defn}

Furthermore, if $A$ and $B$ are dg algebras and $M$ is a dg $A$-$B$-bimodule, then for any $x \in \MC(A)$ the twisted module of $M$ by $x$ is a dg $A^x$-$B$-bimodule, that is, the right $B$-module action remains compatible with the new differential. 

\begin{defn}
A \emph{twisted $A$-module} is a dg $A$-module that is free as an $A$-module after forgetting the differential, that is, it is isomorphic as an $A$-module to $V \otimes A$ for some graded vector space $V$.
A \emph{finitely generated twisted $A$-module} is a twisted $A$-module $V \otimes A$ with $V$ finite-dimensional.
\end{defn}

Given any graded vector space $V$, the $A$-module $V \otimes A$ equipped with the differential $1 \otimes d_A$ is a twisted $A$-module. 
More generally, by considering $V \otimes A$ as a $(\End(V) \otimes A)$-$A$-bimodule, every twisted $A$-module is of the form $(V \otimes A, 1 \otimes d_A)^{[x]}$ for some $x \in \MC(\End{V} \otimes A)$, as noted in~\cite[Remark 3.2]{chl}.

\begin{defn}\label{functorsFG}
Let $A$ be an augmented dg algebra, and let $\widecheck{B}A$ be its extended bar construction.
Let $\xi \in \MC(A \otimes \widecheck{B}A)$ be the canonical Maurer--Cartan element corresponding to the counit $\Omega \widecheck{B}A \to A$ of the adjunction $\Omega \dashv \widecheck{B}$.
Define a pair of functors
\[
G \colon (\pcDGMod \widecheck{B}A)^{\op} \rightleftarrows \DGMod A \lon F
\]
as follows.
The functor $F$ associates to a dg $A$-module $M$ the pseudocompact dg $\widecheck{B}A$-module
\[
FM \coloneqq (M^* \otimes \widecheck{B}A)^{[\xi]}
\]
and the functor $G$ associates to a pseudocompact dg $\widecheck{B}A$-module $N$ the dg $A$-module
\[
GN \coloneqq (N^* \otimes A)^{[\xi]}.
\]
\end{defn}

The functors $F$ and $G$ are well-defined as $FM$ is a dg $(A \otimes \widecheck{B}A)^{\xi}$-$\widecheck{B}A$-bimodule and $GN$ is a dg $(\widecheck{B}A \otimes A)^{\xi}$-$A$-bimodule; the left $(A \otimes \widecheck{B}A)^{\xi}$-module structure on $FM$ is disregarded as similarly with $GN$.
It is a standard fact that $G$ is left adjoint to $F$; more generally this is true replacing $\widecheck{B}A$ with any pseudocompact dg algebra $C$ and $\xi$ with any Maurer--Cartan element in $A \otimes C$, see for example~\cite[Section 6.2]{Pos11}.

\begin{rem}
In the standard formulation of Koszul duality, the functors are defined as follows: the bar construction of a dg algebra $A$ is instead defined to be $BA = \widehat{T}\Sigma^{-1}\widebar{A}^*$, a \emph{local} or \emph{pronilpotent} pseudocompact dg algebra (or dually, a conilpotent dg coalgebra).
Given a dg $A$-module $M$, the corresponding $BA$-module is defined as $(M^* \otimes BA)^{[\xi]}$ where $\xi \in \MC(A \otimes BA)$ is the canonical Maurer--Cartan element corresponding to the counit $\Omega BA \to A$ of the Koszul duality adjunction for algebras.
Conversely, given a $BA$-module $N$, the corresponding $A$-module  is defined as $(N^* \otimes A)^{[\xi]}$. 
\end{rem}

\subsection{Model category structure on $\DGMod A$}

We now define model category structures on $\DGMod A$ and $\pcDGMod \widecheck{B}A$ making the adjunction $G \dashv F$ a Quillen pair.
In~\cite{Pos11} Positselski constructs a model category structure of the ``second kind'' on the category of dg comodules over an arbitrary (not necessarily conilpotent) dg coalgebra; this will be the model category structure on $\pcDGMod \widecheck{B}A$.
We begin by recalling this result.

\begin{defn}
Let $C$ be a dg coalgebra.
A dg $C$-comodule is \emph{coacyclic} if it is in the minimal triangulated subcategory of the homotopy category of dg $C$-comodules containing the total $C$-comodules of exact triples of dg $C$-comodules and closed under infinite direct sums.
\end{defn}

\begin{thm}\label{thm:modelcat} \cite[Theorem 8.2]{Pos11}
Let\/ $C$ be a dg coalgebra.
There exists a model category structure on the category of dg\/ $C$-comodules, where
\begin{enumerate}
\item a morphism $f \colon M \to N$ is a \emph{weak equivalence} if its cone is a coacyclic dg\/ $C$-comodule;
\item a morphism is a \emph{cofibration} if it is injective;
\item a morphism is a \emph{fibration} if it is surjective with a fibrant kernel.
\end{enumerate}
\end{thm}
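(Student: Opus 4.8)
The plan is to realise this structure as an \emph{abelian model structure} in the sense of Hovey, constructed from a pair of compatible complete cotorsion pairs. First I would record that the category $\mathcal{A}$ of dg $C$-comodules is a Grothendieck abelian category: the underlying graded comodules over a graded coalgebra over a field form a locally presentable (indeed locally finite) abelian category with exact filtered colimits and a generator, and passing to the complexes internal to it preserves all of these properties. In particular $\mathcal{A}$ has enough injectives, and its injective objects are precisely the contractible comodules with graded-injective components.

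Next I would set up the two cotorsion pairs. Let $\mathcal{W} \subseteq \mathcal{A}$ be the class of coacyclic comodules, and take $\mathcal{C} = \mathcal{A}$ and $\mathcal{F} = \mathcal{W}^{\perp}$, the right $\operatorname{Ext}^1$-orthogonal. Hovey's correspondence then reduces the theorem to three assertions: (i) $\mathcal{W}$ is a thick subcategory; (ii) $(\mathcal{A},\, \mathcal{W}\cap\mathcal{F})$ is a complete cotorsion pair, with $\mathcal{W}\cap\mathcal{F}$ the class of injectives; and (iii) $(\mathcal{W},\mathcal{F})$ is a complete cotorsion pair. Thickness is immediate from the definition of $\mathcal{W}$ as the smallest triangulated subcategory of the homotopy category that is closed under coproducts and contains the totalizations of exact triples. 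Assertion (ii) amounts to the existence of enough injectives together with the identification $\mathcal{W}\cap\mathcal{W}^{\perp} = \mathrm{Inj}(\mathcal{A})$: every injective comodule is contractible, hence coacyclic, and is trivially right-orthogonal to everything; conversely a fibrant coacyclic comodule is necessarily contractible and graded-injective, hence injective. Granting (i)--(iii), Hovey's theorem outputs a model structure whose cofibrations are exactly the monomorphisms (the left class of $(\mathcal{A},\,\mathcal{W}\cap\mathcal{F})$ being all of $\mathcal{A}$), whose fibrations are the epimorphisms with kernel in $\mathcal{F}=\mathcal{W}^{\perp}$, i.e.\ with fibrant kernel, and whose weak equivalences are precisely the maps with coacyclic cone, matching the statement.

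The main obstacle is assertion (iii): showing that $\mathcal{W}$ is the left-hand side of a \emph{complete} cotorsion pair. The difficulty is that $\mathcal{W}$ is defined by closure under \emph{arbitrary} coproducts, so a priori it is only the closure of a proper class of generators. The crucial step is therefore to prove that $\mathcal{W}$ is \emph{deconstructible}: that there is a \emph{set} $\mathcal{S}$ of coacyclic comodules, of cardinality controlled by $|C|$ and by the generator of $\mathcal{A}$, such that every coacyclic comodule is a transfinite extension of objects of $\mathcal{S}$. This is an accessibility argument --- one uses local presentability to bound, by a fixed regular cardinal $\kappa$, the totalizations of exact triples that generate $\mathcal{W}$, and then a filtration argument to exhibit an arbitrary coacyclic object as a $\kappa$-filtered colimit of objects built from $\mathcal{S}$. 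Once deconstructibility is established, the Eklof--Trlifaj theorem, in its generalisation to Grothendieck categories, yields that $(\mathcal{W},\mathcal{W}^{\perp})$ is a complete cotorsion pair with $\mathcal{W} = {}^{\perp}(\mathcal{W}^{\perp})$, completing the verification. Alternatively one may follow the more hands-on route of \cite{Pos11}, building the required special preenvelopes directly from injective resolutions and the two-term filtrations of coacyclic objects; but the cotorsion-pair formulation makes the logical structure of the argument most transparent.
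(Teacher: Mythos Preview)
The paper does not prove this statement at all: it is quoted verbatim as \cite[Theorem~8.2]{Pos11} and used as a black box, so there is no ``paper's own proof'' to compare against. Your proposal is therefore not a reconstruction of anything in the present paper but an independent attempt at Positselski's theorem.

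That said, your route via Hovey's correspondence and cotorsion pairs is a legitimate and by now standard way to package such arguments, and it is genuinely different in flavour from Positselski's original treatment in \cite{Pos11}, which proceeds by a direct construction of factorisations using cofree comodules and an explicit small-object-type argument tailored to comodules. The cotorsion-pair formulation has the advantage of isolating exactly where the set-theoretic input is needed (deconstructibility of $\mathcal{W}$), whereas Positselski's hands-on approach makes the comodule-specific structure (local finiteness, the fundamental theorem of comodules) more visible.

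There is, however, a point in your sketch that is underjustified and not merely routine. You assert that $\mathcal{W}\cap\mathcal{F}=\mathrm{Inj}(\mathcal{A})$ by saying ``a fibrant coacyclic comodule is necessarily contractible and graded-injective, hence injective''. But at this stage ``fibrant'' has no independent meaning---you are \emph{defining} $\mathcal{F}=\mathcal{W}^{\perp}$ via $\operatorname{Ext}^1$, and you must show directly that an object in $\mathcal{W}\cap\mathcal{W}^{\perp}$ is an injective object of $\mathcal{A}$. This requires knowing that $\mathcal{W}$ contains enough objects to detect injectivity (for instance, that it contains all contractible comodules, including cones on identity maps of arbitrary comodules), and conversely that injectives lie in $\mathcal{W}^{\perp}$; neither step is hard, but neither follows from what you wrote. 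Similarly, your deconstructibility sketch elides the passage from ``$\mathcal{W}$ is a localising subcategory of the homotopy category'' to ``$\mathcal{W}$ is a deconstructible class in the abelian category $\mathcal{A}$'': closure under homotopy-category coproducts and cones does not immediately give closure under abelian extensions and transfinite compositions, and the filtration argument you allude to has real content here (this is essentially where Positselski's explicit work goes). These are fixable, but as written the proposal is a plan rather than a proof.
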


Furthermore, this model category structure is cofibrantly generated, where generating cofibrations are injective maps between finite-dimensional comodules.

\begin{thm}\label{modcat}
Let\/ $A$ be an augmented dg algebra.
There is a cofibrantly generated model category structure on\/ $\DGMod A$, where 
\begin{enumerate}
\item a morphism $f \colon M \to N$ is a \emph{weak equivalence} if it induces a quasi-isomorphism
\[
\Hom_A((V \otimes A)^{[x]}, M) \to \Hom_A((V \otimes A)^{[x]}, N)
\]
for any finitely generated twisted $A$-module $(V \otimes A)^{[x]}$;
\item a morphism is a \emph{fibration} if it is surjective;
\item a morphism is a \emph{cofibration} if it has the left lifting property with respect to acyclic fibrations.
\end{enumerate}
With this model structure, the adjunction\/ $G \dashv F$ is a Quillen pair.
\end{thm}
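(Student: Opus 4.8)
The plan is to produce the model structure by Kan's recognition theorem for cofibrantly generated model categories and then read off the Quillen pair. The whole argument rests on one elementary observation: since a finitely generated twisted module $P=(V\otimes A)^{[x]}$ is free, hence projective, over $A$ after forgetting the differential, the functor $\Hom_A(P,-)$ is exact and commutes with filtered colimits and coproducts. Taking $P=A$ shows that every weak equivalence is in particular a quasi-isomorphism; more importantly, a surjection $f\colon M\to N$ induces a degreewise surjection $\Hom_A(P,f)$ for every such $P$. Consequently a morphism is an \emph{acyclic fibration} (a surjection that is a weak equivalence) precisely when $\Hom_A(P,f)$ is a surjective quasi-isomorphism, i.e.\ an acyclic fibration of complexes, for every finitely generated twisted $P$. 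This reinterpretation is what I would use to pin down the generating sets.

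For the generating sets I would take $J=\{\,0\to D^{n}\,\}_{n\in\mathbb Z}$, where $D^{n}=\mathrm{Cone}(\id_{\Sigma^{n-1}A})$ is the free ``disk'' module (itself a finitely generated twisted module, with twisting the nilpotent matrix unit), and $I=\{\,j_{P,n}\colon \Sigma^{n-1}P\hookrightarrow \mathrm{Cone}(\id_{\Sigma^{n-1}P})\,\}$, indexed by all finitely generated twisted modules $P$ and all $n$. A direct lifting computation shows that $J\text{-inj}$ is exactly the class of degreewise surjections, i.e.\ the fibrations. For $I$, the compatibility of the shift and cone on $\DGMod A$ with the shift and cone on complexes identifies a lifting problem of $f$ against $j_{P,n}$ with a lifting problem of $\Hom_A(P,f)$ against the generating cofibrations of the projective model structure on chain complexes; hence $I\text{-inj}$ is precisely the class of $f$ with $\Hom_A(P,f)$ a surjective quasi-isomorphism for all $P$, which by the previous paragraph is exactly the class of acyclic fibrations. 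Smallness of the domains is automatic: the domains of $J$ are zero, and the domains of $I$ are finitely generated, hence compact, so small relative to the (levelwise injective) relative cell maps.

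The remaining hypotheses of the recognition theorem are then light. The class $W$ of weak equivalences satisfies two-out-of-three and is closed under retracts because it is the preimage under the functors $\Hom_A(P,-)$ of the quasi-isomorphisms. The inclusion $I\text{-inj}\subseteq W\cap J\text{-inj}$ is immediate from the identification of $I\text{-inj}$ with acyclic fibrations, and the converse $W\cap J\text{-inj}\subseteq I\text{-inj}$ is the tautology that a surjective weak equivalence is an acyclic fibration. It remains to check $J\text{-cell}\subseteq W\cap I\text{-cof}$: each $0\to D^{n}$ is a weak equivalence because $D^{n}$ is $A$-contractible, so $\Hom_A(P,D^{n})$ is contractible for every $P$; compactness of the generators then lets $\Hom_A(P,-)$ commute with the transfinite compositions and coproducts building a relative $J$-cell complex, so these remain weak equivalences. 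Moreover $0\to D^{n}\in I\text{-cof}$ because an acyclic fibration is surjective on cycles (its kernel being acyclic), so any map out of the free module $D^{n}$ lifts. This yields the cofibrantly generated model structure with fibrations the surjections, weak equivalences $W$, and cofibrations the maps with the left lifting property against acyclic fibrations, as claimed.

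Finally, to see that $G\dashv F$ is a Quillen pair I would verify that the right adjoint $F$ preserves fibrations and acyclic fibrations. Preservation of fibrations is the elementary duality statement: if $f$ is a surjection then $f^{*}$ is injective, so $Ff\colon FN\to FM$ is an injection of pseudocompact $\widecheck{B}A$-modules, which is a cofibration of comodules by \cref{thm:modelcat}, i.e.\ a fibration in $(\pcDGMod\widecheck{B}A)^{\op}$. For acyclic fibrations, given a surjective weak equivalence $f$ with kernel $K$, dualizing and completing is exact, so $Ff$ is again injective with cokernel $FK$, while exactness of $\Hom_A(P,-)$ applied to $0\to K\to M\to N\to 0$ shows that $\Hom_A(P,K)$ is acyclic for every finitely generated twisted $P$, i.e.\ $K$ is an acyclic object. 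Thus the whole question reduces to the single compatibility statement that $F$ carries acyclic dg $A$-modules to \emph{coacyclic} $\widecheck{B}A$-comodules. I expect this to be the main obstacle: the two notions of triviality are defined very differently---one by vanishing of all the mapping complexes $\Hom_A(P,-)$, the other by membership in the minimal triangulated subcategory generated by totalizations of exact triples---and bridging them is exactly where the ``second kind'' nature of both sides is used. The plan would be to prove it by resolving an acyclic $K$ through exact triples of finitely generated twisted modules, which $F$ sends to exact triples of finite-dimensional comodules (the generators of Positselski's structure), and to use that $F$, being exact, turns the resulting totalizations into coacyclic comodules.
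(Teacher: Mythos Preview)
Your construction of the model structure via Kan's recognition theorem is sound and takes a genuinely different route from the paper. The paper instead \emph{transfers} the model structure from $(\pcDGMod\widecheck{B}A)^{\op}$ along $G\dashv F$ using the Berger--Moerdijk transfer principle: with that approach the Quillen adjunction comes for free, and the remaining work consists of checking (i) that the transferred weak equivalences and fibrations agree with the intrinsic descriptions in the statement (\cref{wepreserved} and \cref{fibrations}), and (ii) the path-object hypothesis, handled by tensoring with the standard interval complex. Your approach inverts the trade-off: the model structure is obtained directly, but the Quillen pair must then be established separately, and that is exactly where your argument has a gap.

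You correctly isolate the obstacle as showing that $F$ carries weakly trivial $A$-modules to coacyclic $\widecheck{B}A$-modules, but your proposed plan---resolving an acyclic $K$ by exact triples of finitely generated twisted modules---is neither completed nor the natural route. The short argument is the one underlying \cref{wepreserved}(1): by the adjunction, $\Hom_{\widecheck{B}A}(FM,V)\cong\Hom_A(GV,M)$ for any pseudocompact $\widecheck{B}A$-module $V$; when $V$ is finite-dimensional, $GV=(V^*\otimes A)^{[\xi]}$ is a finitely generated twisted $A$-module, and conversely every finitely generated twisted $A$-module is isomorphic to some $GV$, since a Maurer--Cartan element of $\End(W)\otimes A$ is precisely a $\widecheck{B}A$-module structure on the finite-dimensional space $W^*$ (cf.\ \cref{prop:repres}). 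Because $FM$ is free over $\widecheck{B}A$, hence cofibrant, and the coderived category is compactly generated by the finite-dimensional modules, $F(g)$ is a weak equivalence exactly when $\Hom_{\widecheck{B}A}(F(-),V)$ applied to $g$ is a quasi-isomorphism for every finite-dimensional $V$---which, via the adjunction, is precisely your weak-equivalence condition on $g$. This closes your gap in one stroke and in fact shows that $F$ both \emph{reflects and preserves} all weak equivalences, not merely acyclic fibrations.
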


To prove \cref{modcat}, we will apply the following version of the transfer principle, which appears in \cite[Sections 2.5--2.6]{bm03}.

\begin{thm*}[Transfer principle]
Let\/ $\mathsf{M}$ be a model category cofibrantly generated by the sets\/ $\mathcal{I}$ and\/ $\mathcal{J}$ of generating cofibrations and generating acyclic cofibrations respectively.
Let\/ $\mathsf{C}$ be a category with finite limits and small colimits.
Let
\[
L \colon \mathsf{M} \rightleftarrows \mathsf{C} \lon R
\]
be a pair of adjoint functors. 
Define a map $f$ in\/ $\mathsf{C}$ to be a weak equivalence \textup(respectively fibration\textup) if $R(f)$ is a weak equivalence \textup(respectively fibration\textup).
These two classes determine a model category structure on\/ $\mathsf{C}$ cofibrantly generated by $L(\mathcal{I})$ and $L(\mathcal{J})$ provided that:
\begin{enumerate}
\item\label{transfer-small} The functor $L$ preserves small objects;
\item\label{transfer-functorial} $\mathsf{C}$ has a functorial fibrant replacement and a functorial path object for fibrant objects.
\end{enumerate}
Furthermore, with this model structure on\/ $\mathsf{C}$, the adjunction $L \dashv R$ becomes a Quillen pair.
\end{thm*}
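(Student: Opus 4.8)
The plan is to verify the hypotheses of the standard recognition theorem for cofibrantly generated model categories (in the form underlying the transfer framework of \cite{bm03}) for the candidate generating sets $L(\mathcal{I})$ and $L(\mathcal{J})$, the class $W$ of maps $f$ with $R(f)$ a weak equivalence of $\mathsf{M}$, and the fibrations as defined. Every condition except one is a formal consequence of the adjunction $L \dashv R$ together with the cofibrant generation of $\mathsf{M}$; the single substantive point is the \emph{acyclicity} condition that every relative $L(\mathcal{J})$-cell complex belongs to $W$, and this is precisely where hypotheses (1) and (2) enter. The hard part is this acyclicity step; everything else is bookkeeping with adjunctions and the small object argument.

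First I would record the formal facts. The class $W$ is closed under retracts and satisfies two-out-of-three — indeed two-out-of-six — since these hold for weak equivalences in $\mathsf{M}$ and $R$ preserves identities, composites and retracts. By the adjunction, a map $f$ of $\mathsf{C}$ lies in $L(\mathcal{J})\text{-inj}$ (respectively $L(\mathcal{I})\text{-inj}$) if and only if $R(f)$ has the right lifting property with respect to $\mathcal{J}$ (respectively $\mathcal{I}$) in $\mathsf{M}$; as $\mathsf{M}$ is cofibrantly generated this identifies $L(\mathcal{J})\text{-inj}$ with the transferred fibrations and $L(\mathcal{I})\text{-inj}$ with the maps $f$ for which $R(f)$ is an acyclic fibration, that is, with $W \cap L(\mathcal{J})\text{-inj}$. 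These identities give at once the compatibility clauses $L(\mathcal{I})\text{-inj} \subseteq W \cap L(\mathcal{J})\text{-inj}$ and $W \cap L(\mathcal{J})\text{-inj} = L(\mathcal{I})\text{-inj}$. Since the domains of $\mathcal{I}$ and $\mathcal{J}$ are small in $\mathsf{M}$ and $L$ preserves small objects by hypothesis (1), the domains of $L(\mathcal{I})$ and $L(\mathcal{J})$ are small in $\mathsf{C}$, and as $\mathsf{C}$ has small colimits the small object argument applies to both sets, yielding the functorial factorizations. Finally $L(\mathcal{J})\text{-cell} \subseteq L(\mathcal{I})\text{-cof}$: since $\mathcal{J} \subseteq \mathcal{I}\text{-cof}$ in $\mathsf{M}$ and the left adjoint $L$ carries $\mathcal{I}\text{-cof}$ into $L(\mathcal{I})\text{-cof}$ (it preserves the pushouts, transfinite composites and retracts building $\mathcal{I}\text{-cof}$), we obtain $L(\mathcal{J}) \subseteq L(\mathcal{I})\text{-cof}$ and hence the cell inclusion.

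For the acyclicity condition $L(\mathcal{J})\text{-cell} \subseteq W$ I would use the Quillen path-object argument; it suffices to treat any map $u \colon A \to B$ with the left lifting property against all transferred fibrations, which every $L(\mathcal{J})$-cell complex has. Using the functorial fibrant replacement from hypothesis (2), let $\phi_A \colon A \to \Phi A$ and $\phi_B \colon B \to \Phi B$ be the natural weak equivalences into fibrant objects, with induced $\Phi u$ satisfying $\Phi u \circ \phi_A = \phi_B \circ u$. Lifting $\phi_A$ against the fibration $\Phi A \to \ast$ along $u$ yields $q \colon B \to \Phi A$ with $q \circ u = \phi_A$. Next, the functorial path object $\Phi B \xrightarrow{s} P\Phi B \xrightarrow{(d_0,d_1)} \Phi B \times \Phi B$ on the fibrant object $\Phi B$ (for which we use the finite limits in $\mathsf{C}$) provides a fibration $(d_0,d_1)$; lifting along $u$ produces a right homotopy between $\Phi u \circ q$ and $\phi_B$. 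Applying $R$, which is a right adjoint and therefore preserves products and, by the definitions of the transferred classes, carries $(d_0,d_1)$ to a fibration and $s$ to a weak equivalence, exhibits this as a genuine right homotopy in $\mathsf{M}$; since $\phi_B \in W$ a two-out-of-three argument on $R(d_0), R(d_1)$ forces $\Phi u \circ q \in W$. Now the two composites $q \circ u = \phi_A$ and $\Phi u \circ q$ both lie in $W$, so the two-out-of-six property applied to $A \xrightarrow{u} B \xrightarrow{q} \Phi A \xrightarrow{\Phi u} \Phi B$ forces $u \in W$, proving acyclicity.

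With all hypotheses of the recognition theorem in hand, the transferred classes determine a cofibrantly generated model structure on $\mathsf{C}$ with generating cofibrations $L(\mathcal{I})$ and generating acyclic cofibrations $L(\mathcal{J})$. That $L \dashv R$ is a Quillen pair is then immediate, since $R$ preserves fibrations and acyclic fibrations by the very definitions of the transferred weak equivalences and fibrations. The main obstacle remains the acyclicity step; a subtlety worth flagging there is that the fibrant replacement and path object must be genuinely \emph{functorial}, both so that the naturality identity $\Phi u \circ \phi_A = \phi_B \circ u$ holds and so that $R$ may be applied compatibly, and that one checks $R$ preserves the path-object data — all of which follow from $R$ being a right adjoint together with the definitions of the transferred fibrations and weak equivalences.
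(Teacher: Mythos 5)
The paper does not prove this statement itself---it quotes it as the transfer principle of Berger--Moerdijk \cite[Sections 2.5--2.6]{bm03}---and your argument is precisely the standard proof given there: verify the hypotheses of the recognition theorem for cofibrantly generated model categories formally via the adjunction, then establish the one substantive point, acyclicity of $L(\mathcal{J})\text{-cell}$, by Quillen's path-object argument using the functorial fibrant replacement and path object supplied by hypothesis (2). Your execution is correct throughout, including the identification $L(\mathcal{J})\text{-inj}$ with the transferred fibrations, the smallness transfer via hypothesis (1), and the legitimate appeal to two-out-of-six (which holds for weak equivalences in any model category by saturation).
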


We first check that the weak equivalences and fibrations, obtained by transferring the model structure on $\pcDGMod \widecheck{B}A$ along the adjunction $G \dashv F$, admit the characterisations in \cref{modcat}. 
In fact, both the functors $F$ and $G$ preserve weak equivalences between all objects.

\begin{lem}\label{wepreserved}\hfill
\begin{enumerate}
\item A morphism $g$ of dg $A$-modules is a weak equivalence if and only if $F(g)$ is a weak equivalence.
\item A morphism $f$ of pseudocompact $\widecheck{B}A$-modules is a weak equivalence if and only if $G(f)$ is a weak equivalence.
\end{enumerate} 
\end{lem}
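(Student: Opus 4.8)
Set $C\coloneqq\widecheck{B}A^{*}$. The model structure on $\pcDGMod\widecheck{B}A$ is by definition the transport of Positselski's structure (\cref{thm:modelcat}) along the anti-equivalence $\pcDGMod\widecheck{B}A\simeq(\text{dg }C\text{-comodules})^{\op}$, and a morphism has coacyclic cone if and only if its opposite does; so a pseudocompact-module morphism is a weak equivalence exactly when the associated comodule morphism is. Under this anti-equivalence $F$ and $G$ become covariant functors
\[
\mathcal{F}\colon\DGMod A\longrightarrow C\text{-}\mathsf{comod},\qquad \mathcal{G}\colon C\text{-}\mathsf{comod}\longrightarrow\DGMod A,
\]
with $\mathcal{F}M=M\otimes C$ the cofree comodule on $M$ (twisted by $\xi$), $\mathcal{G}X=X\otimes A$ the free $A$-module on $X$ (twisted by $\xi$), and $\mathcal{G}\dashv\mathcal{F}$. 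It therefore suffices to prove that $g$ is a weak equivalence iff $\mathcal{F}g$ is, and that $f$ is a weak equivalence iff $\mathcal{G}f$ is. I will use: (i) the enriched adjunction isomorphism of Hom-complexes $\Hom_{A}(\mathcal{G}X,M)\cong\Hom_{C}(X,\mathcal{F}M)$; (ii) that $\mathcal{F}M$ is cofree, hence graded-injective, hence fibrant \cite{Pos11}, so that (every comodule being cofibrant) $H^{n}\Hom_{C}(X,\mathcal{F}M)\cong\Hom_{D^{\mathrm{co}}(C)}(X,\mathcal{F}M[n])$ in the homotopy category $D^{\mathrm{co}}(C)$; (iii) that $\mathcal{F}=(-)\otimes C$ and $\mathcal{G}=(-)\otimes A$ are exact, commute with mapping cones, and preserve coproducts, and that $\mathcal{G}X$ is $A$-free; (iv) that by \cref{prop:repres}, applied to finite-dimensional endomorphism algebras, $\mathcal{G}$ identifies the finite-dimensional comodules with the finitely generated twisted $A$-modules, and that by \cref{thm:modelcat} the finite-dimensional comodules compactly generate $D^{\mathrm{co}}(C)$.

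For (1), put $W\coloneqq\mathrm{cone}(g)$. By (iv) and (iii), $g$ is a weak equivalence in the sense of \cref{modcat} iff $\Hom_{A}(\mathcal{G}X,W)$ is acyclic for every finite-dimensional comodule $X$, while $\mathcal{F}g$ is a weak equivalence iff $\mathcal{F}W=\mathrm{cone}(\mathcal{F}g)$ is coacyclic. Using (i) to rewrite $\Hom_{A}(\mathcal{G}X,W)\cong\Hom_{C}(X,\mathcal{F}W)$ and (ii) to read off its cohomology as $\Hom_{D^{\mathrm{co}}(C)}(X,\mathcal{F}W[n])$, these groups vanish for all finite-dimensional $X$ and all $n$ iff $\mathcal{F}W$ is zero in $D^{\mathrm{co}}(C)$, i.e.\ coacyclic, by the compact generation in (iv). This is exactly the required equivalence.

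For (2), the preservation direction is elementary: if $f$ is a weak equivalence then $Z\coloneqq\mathrm{cone}(f)$ is coacyclic, and for each finite-dimensional $X$ the functor $W\mapsto\Hom_{A}(\mathcal{G}X,\mathcal{G}W)$ is exact and coproduct-preserving (by (iii) and the $A$-freeness of $\mathcal{G}X$), hence sends the coacyclic $Z$ to an acyclic complex; as this complex is $\mathrm{cone}\bigl(\Hom_{A}(\mathcal{G}X,\mathcal{G}f)\bigr)$, the map $\mathcal{G}f$ is a weak equivalence. For the reflecting direction, suppose $\mathcal{G}f$ is a weak equivalence and again set $Z\coloneqq\mathrm{cone}(f)$; then $\Hom_{A}(\mathcal{G}X,\mathcal{G}Z)$ is acyclic for all finite-dimensional $X$, and applying the computation of part (1) to the $A$-module $\mathcal{G}Z$ (noting $\mathcal{F}\mathcal{G}Z$ is again cofree, hence fibrant) shows this is equivalent to $\mathcal{F}\mathcal{G}Z$ being coacyclic.

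It remains to pass from $\mathcal{F}\mathcal{G}Z$ coacyclic to $Z$ coacyclic, and this is the main obstacle. It suffices to prove that the adjunction unit $\eta_{Z}\colon Z\to\mathcal{F}\mathcal{G}Z$ is a weak equivalence for every dg $C$-comodule $Z$, for then $Z$ and $\mathcal{F}\mathcal{G}Z$ are isomorphic in $D^{\mathrm{co}}(C)$ and the two coacyclicities coincide. I would prove the unit statement in two steps. Since $\mathrm{id}$ and $\mathcal{F}\mathcal{G}$ are exact and coproduct-preserving and $\eta$ is natural, the class $\{Z:\mathrm{cone}(\eta_{Z})\text{ coacyclic}\}$ is closed under shifts, cones, coproducts and totalisations of short exact sequences; as every dg comodule is the filtered union of its finite-dimensional dg subcomodules and such colimits are built from coproducts and cones, the problem reduces to finite-dimensional $Z$. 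For finite-dimensional $Z$ the cone of $\eta_{Z}$ is governed by the Koszul complex $A\otimes C$ of the extended bar construction with its $\xi$-twisted differential, and the task becomes showing that the unit map $k\to A\otimes C$ has coacyclic cone, not merely that it is a quasi-isomorphism. This is precisely the delicate, ``second kind'' point where the non-conilpotency of $\widecheck{B}A$ is essential; I expect to establish it by filtering $A\otimes C$ by a weight/coradical filtration so as to compare it with the classical conilpotent bar resolution $A\otimes BA$, which carries an explicit contracting homotopy, the remaining non-conilpotent contributions being coacyclic. Granting this, (1) and (2) follow as above.
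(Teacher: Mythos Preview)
Your argument for part (1), and for the preservation direction in part (2), is correct and matches the paper's approach: translate via the enriched adjunction $\Hom_A(\mathcal{G}X,-)\cong\Hom_C(X,\mathcal{F}(-))$, use that $\mathcal{F}M$ is cofree hence fibrant, and detect coacyclicity by mapping in from the finite-dimensional comodules, which compactly generate $D^{\mathrm{co}}(C)$. The paper's proof is terser but makes exactly these moves.

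The divergence is in the reflection direction of (2). The paper's proof of this lemma in fact does \emph{not} establish that direction: it only shows that $G$ carries totalisations of short exact sequences to weakly trivial $A$-modules, hence that $G$ preserves weak equivalences. The ``only if'' is, as you correctly diagnose, equivalent to the unit $Z\to\mathcal{F}\mathcal{G}Z$ being a weak equivalence for every $Z$, and the paper proves that separately, as the Quillen equivalence theorem (\cref{Qequiv}). So at the level of this lemma you are attempting more than the paper does.

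Your proposed route to the unit statement --- reduce to finite-dimensional $Z$ by a closure argument, then control the twisted Koszul complex $A\otimes C$ via a weight or coradical-type filtration comparing with the conilpotent bar resolution --- is where the genuine gap sits, and you flag it yourself. For a non-conilpotent $C=(\widecheck{B}A)^*$ there is no coradical filtration to invoke, and a first-kind contracting homotopy on $A\otimes BA$ does not by itself yield a second-kind coacyclicity statement for $A\otimes C$. The paper avoids this difficulty completely: it uses the explicit length-two free bimodule resolution of $\widecheck{T}V$ from \cref{pcTVresolution}. Tensoring that resolution over $\widecheck{B}A$ with $N$ identifies $FGN$ with the totalisation of a short exact sequence of $\widecheck{B}A$-modules ending in $N$, so the cone of $FGN\to N$ is coacyclic \emph{by definition} of coacyclicity, with no filtration or reduction step needed. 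You should replace your final paragraph with an appeal to that resolution.
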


\begin{proof}
For (1), let $g \colon M \to N$ be a map of dg $A$-modules.
By definition $F(g) \colon FM \to FN$ is a weak equivalence if and only if it induces a quasi-isomorphism
\[
\Hom_{\widecheck{B}A}(FM,V) \to \Hom_{\widecheck{B}A}(FN,V)
\]
for any finite-dimensional dg $\widecheck{B}A$-module $V$.
Equivalently, this says that the  dg $A$-modules $M \otimes V$ and $N \otimes V$ (with possibly twisted diffferentials) are quasi-isomorphic for any finite-dimensional $V$, that is, $g$ is a weak equivalence. 

For (2), it suffices to show that $G$ takes exact triples of $\widecheck{B}A$-modules to weakly trivial $A$-modules. 
Let $N_1 \to N_2 \to N_3$ be an exact triple of $\widecheck{B}A$-modules and $N$ be its total complex. 
Then $GN$ is the total complex of the complex $G(N_3) \to G(N_2) \to G(N_1)$, which is a bicomplex with three vertical columns and the all horizontal rows exact. 

Now let $M=A\otimes V$ be a finitely generated twisted $A$-module. 
Applying $\Hom_A(M,-)$ to the above bicomplex gives $\Hom(V,G(N_3)) \to \Hom(V,G(N_2)) \to \Hom(V,G(N_1))$. 
Since exactness of the rows is preserved, $GN$ is indeed weakly trivial.
\end{proof}

\begin{lem}\label{fibrations}
A morphism $g$ of dg $A$-modules is a fibration if and only if $F(g)$ is a fibration.
\end{lem}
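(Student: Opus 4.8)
The plan is to transport the whole question across the duality equivalence $(\pcDGMod\widecheck{B}A)^{\op}\simeq\{\text{dg }(\widecheck{B}A)^*\text{-comodules}\}$, under which the category $\mathsf{M}=(\pcDGMod\widecheck{B}A)^{\op}$ appearing in the transfer principle becomes, with its transferred model structure, exactly Positselski's model category of \cref{thm:modelcat}. First I would record how $F$ looks through this equivalence. On objects it sends a dg $A$-module $M$ to the dual of $FM=(M^*\otimes\widecheck{B}A)^{[\xi]}$, namely the cofree dg $(\widecheck{B}A)^*$-comodule $M\otimes(\widecheck{B}A)^*$ equipped with the differential twisted by $\xi$ (the underlying graded comodule is cofree on $M$, and $M^{**}\cong M$ since $M$ is discrete). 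On morphisms it sends $g\colon M\to N$ to $g\otimes\id\colon M\otimes(\widecheck{B}A)^*\to N\otimes(\widecheck{B}A)^*$. Thus $F(g)$ is a fibration in $\mathsf{M}$ if and only if $g\otimes\id$ is a fibration in the sense of \cref{thm:modelcat}, i.e.\ a surjection with fibrant kernel, and the task becomes to show that this happens precisely when $g$ is surjective.

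Next I would dispose of the surjectivity half, which is routine. Since $k$ is a field, $-\otimes(\widecheck{B}A)^*$ is exact and $(\widecheck{B}A)^*\neq 0$, so $g\otimes\id$ is surjective if and only if $g$ is; and in that case its kernel, computed in graded vector spaces and then given the restricted differential, is $(\ker g)\otimes(\widecheck{B}A)^*$ with its induced twisted differential. Here $\ker g$ is a dg $A$-submodule of $M$ because $g$ is a map of dg modules, so this is a genuine dg subcomodule.

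The crux is the fibrancy of this kernel. Its underlying graded comodule is cofree, hence an injective graded comodule, and twisting the differential does not alter the underlying graded comodule; so I would invoke the fact that the fibrant objects of the second-kind model structure of \cref{thm:modelcat} are exactly the dg comodules whose underlying graded comodule is injective (equivalently, the retracts of cofree comodules). Granting this, $(\ker g)\otimes(\widecheck{B}A)^*$ is fibrant, so when $g$ is surjective the map $g\otimes\id$ is a surjection with fibrant kernel, hence a fibration, whence $F(g)$ is a fibration; conversely a fibration $F(g)$ forces $g\otimes\id$, and therefore $g$, to be surjective. This identifies the transferred fibrations with the surjections, as claimed in \cref{modcat}. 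The one substantial point requiring justification, and the step I expect to be the main obstacle, is precisely this characterisation of fibrant comodules as the graded-injective ones---equivalently, that (twisted) cofree comodules are fibrant---which I would extract from Positselski's analysis underlying \cref{thm:modelcat} rather than reprove from scratch.
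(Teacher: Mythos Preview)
Your argument is correct and is essentially the paper's proof transported to the comodule side: the paper works directly in $\pcDGMod\widecheck{B}A$, observing that $F(g)\colon FN\to FM$ is a cofibration there iff it is injective with cofibrant cokernel, and that for surjective $g$ (so $M\cong N\oplus V$ as graded spaces) this cokernel is $(V^*\otimes\widecheck{B}A)^{[\xi]}$, which is cofibrant. Your fibrancy-of-the-kernel step via ``fibrant $=$ graded-injective'' is exactly the dual of the paper's unjustified ``which is cofibrant'', and you are more explicit both about that point and about the easy converse direction, which the paper leaves implicit.
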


\begin{proof}
Let $g \colon M \to N$ be a fibration in dg $A$-modules, so $M \cong N \oplus V$ for some graded vector space $V$.
Then $F(g) \colon FN \to FM$ is a cofibration in $\pcDGMod \widecheck{B}A$ if and only if it is injective with cofibrant cokernel.
But indeed, $F(g) \colon (N^* \otimes \widecheck{B}A)^{[\xi]} \to (M^* \otimes \widecheck{B}A)^{[\xi]}$ has cokernel $(V^* \otimes \widecheck{B}A)^{[\xi]}$, which is cofibrant.
\end{proof}

\begin{proof}[Proof of \cref{modcat}] 
By \cref{wepreserved} and \cref{fibrations}, it suffices to check conditions (\ref{transfer-small}) and (\ref{transfer-functorial}) in the transfer theorem.
Condition (\ref{transfer-small}) holds as $G$ preserves small objects, and every object is fibrant so the first part of (\ref{transfer-functorial}) trivially holds. 
Hence it only remains to prove that functorial path objects exist for any $A$-module.
Let $I$ be the standard interval object for dg vector spaces, that is, $I = k \oplus \Sigma^{-1}k \oplus k$ with differential $d(a,b,c) = (da, -db+a-c, dc)$.
Then for any $A$-module $M$, there is a factorisation
\[
M \xrightarrow{e} M \otimes I \xrightarrow{(p_1,p_2)} M \oplus M
\]
where 
$e(a) = (a,0,a)$ and $p_1(a,b,c)=a$, $p_2(a,b,c)=c$. 
Clearly $(p_1, p_2)$ is a fibration by \cref{fibrations}. 
Since $I$ is acyclic, we have a quasi-isomorphism
\[
(M \otimes V^*)^{[x]} \to (M \otimes V^*)^{[x]} \otimes I \cong (M \otimes I \otimes V^*)^{[x]}
\]
for any finitely generated twisted $A$-module $(V \otimes A)^{[x]}$, so $e$ is a weak equivalence.
Thus $M \otimes I$ is a functorial path object for $M$.
\end{proof}

We now show that the adjoint pair $(F,G)$ is a Quillen equivalence.

\begin{thm}\label{Qequiv}
Let $A$ be an augmented dg algebra and $\widecheck{B}A$ be its extended bar construction.
\begin{enumerate}
\item For any pseudocompact $\widecheck{B}A$-module $N$, the unit $FGN \to N$ of the adjunction is a weak equivalence of pseudocompact $\widecheck{B}A$-modules. 
\item For any dg $A$-module $M$, the counit $GFM \to M$ of the adjunction is a weak equivalence
of $A$-modules.
\end{enumerate}
Thus, the Quillen adjunction $G \dashv F$ is a Quillen anti-equivalence between dg $A$-modules and pseudocompact $\widecheck{B}A$-modules.
\end{thm}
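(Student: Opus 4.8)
The plan is to prove statement (2) by hand and then obtain (1) for free. Because both $F$ and $G$ preserve \emph{and} reflect all weak equivalences by \cref{wepreserved}, their total derived functors coincide with $F$ and $G$ themselves; hence the Quillen adjunction $G \dashv F$ is a Quillen anti-equivalence precisely when the natural maps in (1) and (2) are objectwise weak equivalences. I would first observe that (2) implies (1): applying (2) to $N = FM$ shows that $FGFM \to FM$ is a weak equivalence, and a triangle identity of the adjunction writes $\id_{FM}$ as a composite $FM \to FGFM \to FM$ beginning with this map, so two-out-of-three forces $F(GFM \to M)$ to be a weak equivalence; since $F$ reflects weak equivalences (\cref{wepreserved}(1)), $GFM \to M$ is one too. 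This asymmetry is unavoidable: there is no bimodule resolution of a general dg algebra $A$ analogous to \cref{pcTVresolution}, so (1) cannot be proved by the same direct method and must be deduced formally.

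The core of the proof is an explicit identification of $FGN$. Unwinding the definitions gives $FGN = (N \otimes A^* \otimes \widecheck{B}A)^{[\xi]}$. Writing $A^* = k \oplus \widebar{A}^*$ and recalling that $\widebar{A}^* = \Sigma W$, where $W = \Sigma^{-1}\widebar{A}^*$ is the generating space of $\widecheck{B}A = \widecheck{T}W$, the underlying graded comodule splits as $(N \otimes \widecheck{B}A) \oplus \Sigma(N \otimes W \otimes \widecheck{B}A)$. I claim that this is exactly the total complex of $N \otimes_{\widecheck{B}A}(\text{the resolution of \cref{pcTVresolution}})$, with the cross-term of the twisted differential matching the resolution differential $d(1\otimes v\otimes 1)=v\otimes 1 - 1\otimes v$ and with the map in (2) corresponding to the multiplication $m$. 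Checking this correspondence on the nose --- that the $\xi$-twist contributes precisely the resolution differential, and that the Maurer--Cartan equation for $\xi$ makes the resulting three-term sequence a short exact sequence of \emph{dg} comodules --- is the main calculation I expect to grind through.

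Granting this identification, the conclusion drops out and, crucially, never touches the (non-)conilpotency of $\widecheck{B}A$. Since $\widecheck{B}A$ is a free pseudocompact algebra, \cref{pcTVresolution} is a \emph{bounded} (length-one) resolution, and it is split exact as a sequence of left $\widecheck{B}A$-modules (split the multiplication $m$ by $c \mapsto c\otimes 1$); hence $N \otimes_{\widecheck{B}A} -$ keeps it exact. Therefore the cone of the map in (2) is the total complex of the short exact sequence of dg comodules
\[
0 \to \Sigma(N \otimes W \otimes \widecheck{B}A) \to N \otimes \widecheck{B}A \to N \to 0,
\]
i.e.\ the total comodule of an exact triple, which is coacyclic. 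By \cref{thm:modelcat} the map in (2) is thus a weak equivalence, proving (2); statement (1) and the Quillen anti-equivalence then follow by the formal arguments above.

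The hard part is therefore entirely contained in the differential-level identification of the second paragraph: once $FGN$ is recognized as $N$ tensored with the free-algebra bimodule resolution, everything reduces to the elementary fact that a bounded exact complex of comodules is coacyclic. I expect the second-kind nature of the weak equivalences to surface exactly here, as the necessity of invoking coacyclicity rather than plain acyclicity --- a step rendered harmless precisely by the boundedness furnished by \cref{pcTVresolution}.
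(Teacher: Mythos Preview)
Your proposal is correct. For part~(2) your approach coincides with the paper's: both identify $FGN$ with $N$ tensored over $\widecheck{B}A$ with the length-one bimodule resolution of \cref{pcTVresolution}, and then conclude by observing that the cone of the counit is the total complex of an exact triple of dg comodules, hence coacyclic. The paper states this identification in a single sentence; your more explicit unpacking (splitting $A^{*}=k\oplus\Sigma W$ and matching the $\xi$-twist against the resolution differential) is exactly the calculation the paper leaves implicit.

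Your treatment of part~(1), however, genuinely differs from the paper's. The paper argues (1) directly: it notes that $FM$ is cofibrant in $\pcDGMod\widecheck{B}A$, rewrites $G$ as $\Hom_{\widecheck{B}A}(B(-),k)$ to identify $GFM$ with $M$ up to quasi-isomorphism, and then upgrades this to a weak equivalence by testing against finitely generated twisted $A$-modules. You instead deduce (1) formally from (2) via the triangle identity $\id_{FM}=\eta_{FM}\circ F(\epsilon_M)$ together with the fact (\cref{wepreserved}) that $F$ reflects weak equivalences. Your route is cleaner and more self-contained---it consumes nothing about $A$ beyond what (2) already used, and makes transparent that the entire equivalence rests on the free-algebra resolution of \cref{pcTVresolution}. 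The paper's direct argument for (1), by contrast, requires an additional description of $G$ and is rather tersely written; what it buys is a more symmetric treatment of the two counits and an explicit indication of \emph{why} $GFM\simeq M$ at the level of underlying complexes.
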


\begin{proof}
  For part (1), given a $\widecheck{B}A$-module $N$, the composition $FG(N)$ is the two-term resolution of $N$ from \cref{pcTVresolution}, so is weakly equivalent to $N$.
  Part (2) now follows immediately from part (1) as, by \cref{wepreserved}, the right Quillen functor $F$ reflects weak equivalences. 
  This is a standard fact; see for example \cite[Corollary 1.3.16]{hov99}.
\end{proof}

\begin{rem}
Note that the homotopy category of the constructed closed model category on dg $A$-modules is a compactly generated triangulated category (being anti-equivalent to the category of pseudocompact dg modules over a $\widecheck{B}A$) with compact (small) objects being dg modules that are homotopy equivalent to retracts of finitely generated twisted $A$-modules.
We will denote this homotopy category by $D^{\text{II}}_c(A)$.
\end{rem}

\begin{exmp}\label{smallexample}
Consider the dg algebra $A = k[x]/x^2$ with zero differential and $x$ in degree~1.
We have $\widecheck{BA} \cong \widecheck{k[x]}$.
If $k$ is algebraically closed then the pseudocompact completion $\widecheck{k[x]}$ of $k[x]$ is the product of completions of $k[x]$ at every maximal ideal of $k[x]$, the latter correspond precisely to elements of $k$.
In other words,
\[
\widecheck{BA} \cong \widecheck{k[x]} \cong \prod_{\alpha \in k} (k[[x]])_{\alpha}
\]
(this result, in a more general form, is given in \cite[Example 1.13]{gg99}).
The derived category $D^{\text{II}}_c(A)$ of $A$ of second kind is anti-equivalent to the derived category (of second kind) of pseudocompact modules over $\prod_{\alpha \in k} (k[[x]])_{\alpha}$ and thus, is drastically different from the ordinary derived category of $A$.
Note that $\MC(A) = \{ ax : a \in k\}$; then the twisted $A$-modules $A^{\xi}$ for $\xi\in\MC(A)$ are pairwise weakly inequivalent and form a set of compact generators for $D^{\text{II}}(A)$; it is easy to see that it is not possible to choose a single compact generator.
\end{exmp}
\begin{exmp}
The derived category of second kind $D^{\text{II}}_c$ arises in a number of situations of a geometric origin:\begin{itemize}
\item Let $M$ be a smooth manifold and $\mathscr{A}^*(M)$ be its smooth  de Rham algebra; here the ground field $k$ is $\mathbb{R}$, the real numbers. The choice of a point in $M$ makes $\mathscr{A}^*(M)$ into an augmented dg algebra. A compact object in $D_c^{\text {II}}(\mathscr{A}^*(M))$ is a cohesive $\mathscr{A}^*(M)$-module of \cite{Blo10} and the subcategory of compact objects is equivalent to the triangulated category of perfect cohomologically locally constant complexes of sheaves on $M$ by \cite[Theorem 8.1]{chl}.
\item Let $M$ be a smooth affine algebraic variety over a field $k$ of characteristic zero having a base point $\operatorname{Spec}(k)\to M$ and $\mathscr{A}^*(M)$ be its \emph{algebraic} de Rham algebra. Then twisted modules over $\mathscr{A}^*(M)$ correspond to $D$-modules, i.e. modules over the ring of differential operators on $M$ by \cite[Theorem B.2]{Pos11} while compact objects in $D_c^{\text {II}}(\mathscr{A}^*(M))$ correspond essentially to \emph{coherent} $D$-modules.
\item Let $M$ be a compact complex manifold and $\mathscr{A}^{0*}(M)$ be the Dolbeault algebra of $M$ that can be viewed as augmented by a choice of a base point in $M$. Again, a compact object  $D_c^{\text {II}}(\mathscr{A}^*(M))$ is a cohesive $\mathscr{A}^{0*}(M)$-module and the subcategory of compact objects is equivalent to the derived category of sheaves on $M$ with coherent cohomology, \cite[Theorem 4.1.3]{Blo10} or \cite[Theorem 8.3]{chl}  
\end{itemize}
\end{exmp}

\subsection{Comparison with other weak equivalences in $\DGMod A$}

Here, we compare the notion of weak equivalences in our model structure on $\DGMod A$ with other notions of a weak equivalence from the literature.

Firstly, we can consider the standard model structure on $\DGMod A$ where weak equivalences are quasi-isomorphisms and fibrations are surjections.
It is clear that any weak equivalence in our model structure is a quasi-isomorphism, by considering $A$-modules trivially twisted by the Maurer--Cartan element $x=0$.
It follows that $D^{\text{II}}_c(A)$ contains the ordinary derived category of $A$ as a full subcategory.
If $A$ is concentrated in nonpositive degrees (e.g.~it is an ordinary algebra), or $\bar{A}$ is concentrated in degrees $>1$ (e.g.~cohomology algebras of simply-connected topological spaces) then by the degree considerations, $\widecheck{B}A \cong \widehat{T}\Sigma^{-1}\widebar{A}^*$, the usual bar construction of $A$ from which it follows that our closed model structure on $A$-modules is the ordinary one (i.e.~of the first kind).
Another situation where we obtain the ordinary closed model category of the first kind is when the dg algebra $A$ is cofibrant.
However, for general $A$, even with a vanishing differential, we get a different result, cf.~\cref{smallexample}.

In \cite{Pos11}, the coderived category and contraderived category of a dg algebra $A$ are defined, which are obtained by localising at coacyclic dg $A$-modules and contraacyclic dg $A$-modules respectively.
These categories are different, in general, from the ordinary derived category of the first kind, even for ungraded algebras, see e.g.~\cite[Example 3.3]{Pos11} and thus, also from $D^{\text{II}}_c(A)$. 

It was observed in \cite{Pos11, KLN10} that the category $D^{\text{II}}_c(A)$ is contained in both the coderived and contraderived category of $A$.
It is, therefore, the derived category of $A$ of the second kind that is closest to the ordinary derived category of $A$.
If $A$ is right Noetherian and has finite right homological dimension then $D^{\text{II}}_c(A)$ coincides with both coderived and contraderived category of $A$ by \cite[Question 3.8]{Pos11}.
Another situation when this happens is when $A$ is the cobar construction of a (possibly nonconilpotent) dg coalgebra $B$ since in this case the co/contraderived category of $A$ is equivalent to the coderived category of $B$ and is, therefore, compactly generated.
Related questions are considered in the recent paper \cite{Pos17}.

\section{Curved Koszul duality for modules}\label{sec:curved}
\label{sec:curvedkd}

In this section, we consider generalisations of the previous results in the cases where the underlying dg algebra is curved or non-augmented. 
First we need to develop the extended bar-cobar formalism in the curved, non-augmented context.

A \emph{curved dg algebra} is a graded algebra $A$ with a degree one derivation $d \colon A \to A$, such that for any $a \in A$, $d^2(a)=[h,a]$ for some $h \in A^2$ satisfying $d(h)=0$. 
The linear map $d$ is usually called the \emph{differential} of $A$, despite not being square zero, and $h$ is called the \emph{curvature} of $A$. 

A morphism of curved algebras
$(A,d_A, h_A)\to (B,d_B, h_B)$ is a pair $(f,b)$ consisting of a morphism of graded algebras $f \colon A \to B$
and an element $b\in B^1$ satisfying the equations: 
\begin{align*}
&f(d_A(x)) = d_B(f(x)) + [b, f(x)],\\
&f(h_A) = h_B + d_B(b) + b^2,
\end{align*} 
for all $x\in A$; if $b=0$ then the corresponding morphism $A\to B$ is called \emph{strict}. 
The category of curved dg algebras is denoted $\CDG$ and the category of pseudocompact curved dg algebras is denoted $\pcCDG$; additionally we assume that our (pseudocompact or not) curved dg algebras have nonzero units. 
A \emph{Maurer--Cartan element} in a curved dg algebra $A$ is an element $x \in A$ of degree 1 such that $h+dx+x^2 = 0$. Given two curved dg algebras $(A, d_A, h_A)$ and $(B, d_B, h_B)$ their tensor product $A\otimes B$ is likewise a curved dg algebra with $d_{A\otimes B}:=d_A\otimes 1+1\otimes d_B$ and $h_{A\otimes B}:=h_A\otimes 1+1\otimes h_B$.

Given a curved dg algebra $(A,d_A, h_A)$ and an element $b\in A^1$ (not necessarily Maurer--Cartan) we can define the \emph{twisting} of $A$ by $b$ as a curved dg algebra $A^b$ with the same underlying vector space as $A$, twisted differential $d^b(x):=d_A(x)+[b,x]$ for $x\in A$ and curvature $h^b:=h_A+ d_A(b)+b^2$. 
Then $(\id, b)$ determines a (curved) isomorphism $A^b \to A$.

If $A$ is a curved dg algebra, then a \emph{dg $A$-module} is a graded (right) $A$-module $M$ with a degree one derivation $d_M \colon M \to M$ such that $d_M$ is compatible with the differential $d$ on $A$, and for any $m \in M$, $d_M^2(m)=mh$; one can similarly define left dg $A$-modules. 
If $M$ is a left dg $A$-module and $x\in A^1$, then there is a left dg $A^x$-module $M^{[x]}$ defined as in the uncurved case, cf.~\cref{def:twisted}.
Given a curved dg algebra $A$ and a pseudocompact curved dg algebra $C$, we denote the categories of dg $A$-modules and pseudocompact $C$-modules by $\DGMod A$ and $\pcDGMod C$, just as before. 

We now describe how to modify the bar and cobar constructions from \cref{dgabarcobar} in the general non-augmented and curved case. 
Let $A$ be a unital curved dg algebra with differential $d$ and curvature $h$. 
Since $1\neq 0$ in $A$ we can choose a homogeneous $k$-linear retraction $\epsilon \colon A \to k$, to be regarded as a ``fake augmentation''. 
It allows one to identify the dg vector space $\widebar{A}:=A/k$ with a subspace (possibly not dg) of $A$ so that $A\cong k\oplus \widebar{A}$. 
The multiplication $m \colon A\otimes A\to A$ restricted to $\widebar{A}$ has two components 
$m^{\epsilon}_{\widebar{A}} \colon \widebar{A}\otimes\widebar{A} \to \widebar{A}$ and 
$m_k^{\epsilon} \colon \widebar{A}\otimes\widebar{A} \to k$.
We will denote the corresponding components of the differential $d$ and curvature $h$ by $d_{\widebar A}^{\epsilon}, d_k^{\epsilon}$ and $h_{\widebar A}^{\epsilon}$, respectively; note that the component $h_k^{\epsilon}$ vanishes for degree reasons. 
Explicitly, for all $\widebar{a}, \widebar{b}\in\widebar{A}\subset A$,
\begin{equation}\label{eq:fakeaug}
\begin{split}
&m^{\epsilon}_{\widebar{A}}(\widebar{a},\widebar{b})=\widebar{a}\widebar{b}-\epsilon(\widebar{a}\widebar{b}),\ 
m^{\epsilon}_k(\widebar{a},\widebar{b})=\epsilon(\widebar{a}\widebar{b});\\ 
&d_{\widebar{A}}^{\epsilon}(\widebar{a})=d(\widebar{a})-\epsilon(d(\widebar{a})),\ 
d^{\epsilon}_k(\widebar{a})=\epsilon(d(\widebar{a}));\\ 
&h_{\widebar{A}}^{\epsilon}=h-\epsilon(h)=h.
\end{split}
\end{equation}
To alleviate notation, we will suppress the superscript $\epsilon$ at $m_{\widebar{A}}$, $m_k$ etc.~where it does not lead to confusion. 

Consider the graded algebra $T^{\prime}\Sigma^{-1}A^*$, the non-reduced semi-completed bar construction of $A$.  
Choose a basis $\{t^i : i\in I\}$ in $\widebar{A}$ where $I$ is some indexing set and let $\{\tau, t_i : i\in I\}$ be the basis in $\Sigma^{-1}A^*$ dual to the basis $\{1,t_i : i\in I\}$ in $A$. 
We will write $\partial_{t_i}$ for the derivation of $T^{\prime}\Sigma^{-1}A^*$ having value $1$ on $t_i$ and zero on other basis elements of $\Sigma^{-1}A^*$ and similarly for $\partial_\tau$. 
Then define the differential on $T^{\prime}\Sigma^{-1}A^*$ as the following derivation:
\[
\xi:=\sum_{i\in I}([\tau, t_i]+f_i(\mathbf t))\partial_{t_i}
+(g(\mathbf{t})+\tau^2)\partial_\tau+\sum_{i\in I}a_i\partial_{t_i}
\]
where $f_i(\mathbf t)$, $g(\mathbf t)$ stand for sums of linear and quadratic monomials in $t$ (so these elements of $T^{\prime}\Sigma^{-1}A^*$ do not depend on $\tau$). 
Here 
the term $\sum_{i\in I}f_i(\mathbf t)\partial_{t_i}$ corresponds to the ``multiplication and differential'' $m_{\widebar A}$ and $d_{\widebar A}$, 
the term $\sum_{i\in I}a_i\partial_{t_i}$ reflects the curvature $h_{\widebar A}$, 
the term $g(\mathbf{t})\partial_\tau$ corresponds to $m_k$ and $d_k$, 
and the term $(\sum_{i\in I}[\tau, t_i]+\tau^2)\partial_\tau$ reflects the multiplication with the unit in $A$. 
Let $\xi_1:=\sum_{i\in I}f_i(\mathbf t)\partial_{t_i}
+\sum_{i\in I}a_i\partial_{t_i}$ and $\xi_2:=\sum_{i\in I}[\tau, t_i]\partial_{t_i}
+(g(\mathbf{t})+\tau^2)\partial_\tau$; then $\xi=\xi_1+\xi_2$.

The \emph{reduced} semi-complete bar construction $B^\prime_{\epsilon} A$ of $A$ is a subalgebra in $T^{\prime}\Sigma^{-1}A^*$ spanned by sums of monomials which do not depend on $\tau$ (so only depend on $t_i$, $i\in I$). 
Thus, the underlying graded algebra of $B^\prime_{\epsilon} A$ is isomorphic to $T^\prime\Sigma^{-1}\widebar{A}^*$. 
The differential on $B^\prime_{\epsilon}A$ is $\xi_1$. 
Note that $\xi^2=0$ but $\xi_1^2=0$ only when $\epsilon \colon A \to k$ is a dg algebra map; in this case $g(\mathbf t)=0$.   
However $(B^\prime_{\epsilon} A, \xi_1)$ is a \emph{curved} dg algebra, more precisely the following result holds.

\begin{lem}\label{lem:semicomplete}
Let $A$ be a curved dg algebra. 
Then: 
\begin{enumerate} 
\item The reduced semi-complete bar construction $B^\prime_{\epsilon}A$ endowed with the differential $\xi_1$ defined above, is a curved dg algebra with curvature $-g(-\mathbf t)$, an element of $T'\Sigma^{-1}\widebar{A}^*$ obtained from $-g(\mathbf t)$ by replacing every indeterminate $t_i$ with $-t_i$.
\item\label{lem:semicomplete2} The curved dg algebra $B'_{\epsilon}A$ is independent, up to a natural isomorphism, of the choice of a basis in $\widebar{A}$. 
Furthermore, for different choices of retractions ${A \to k}$, the corresponding reduced semi-complete bar constructions are isomorphic as curved dg algebras. 
More precisely, denote by\/ $b_{\epsilon-\epsilon'}$ the element in $B^\prime A\cong T^\prime\Sigma^{-1}\widebar{A}^*$ corresponding to the linear map $\epsilon-\epsilon' \colon A\to k$; then the curved map $(\id, b_{\epsilon-\epsilon^\prime})$ determines a curved isomorphism $B'_{\epsilon} A \to B'_{\epsilon'}A$.
\item The correspondence $A\to B'_{\epsilon}A$ determines a contravariant functor from the category $\CDG$ to the category of topological curved dg algebras.
\end{enumerate}
\end{lem}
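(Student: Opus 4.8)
The plan is to deduce all three statements from the single identity $\xi^2=0$ on the non-reduced construction $T'\Sigma^{-1}A^*$, exploiting the key observation that the derivation $\xi$, being assembled from the multiplication, differential, curvature and unit of $A$ alone, does not itself depend on the retraction $\epsilon$; what depends on $\epsilon$ is the splitting $\Sigma^{-1}A^*=\langle\tau\rangle\oplus\Sigma^{-1}\widebar A^*$ with $\tau=\Sigma^{-1}\epsilon$, and hence the induced decomposition and the subalgebra $B'_\epsilon A$. I grade $T'\Sigma^{-1}A^*$ by the number of tensor factors equal to $\tau$ (the \emph{$\tau$-weight}); this is additive under multiplication, so $B'_\epsilon A$ is precisely the $\tau$-weight-zero subalgebra, and I decompose $\xi=\xi_-+\xi_1+\xi_+$ by the change in $\tau$-weight, where $\xi_-=g(\mathbf t)\partial_\tau$ lowers it by one, $\xi_1$ preserves it, and $\xi_+=\sum_i[\tau,t_i]\partial_{t_i}+\tau^2\partial_\tau$ raises it by one.

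For (1), I expand $0=\tfrac12[\xi,\xi]$ and collect terms by the induced change in $\tau$-weight. The weight-preserving component reads $\xi_1^2=-[\xi_-,\xi_+]$, and on the subalgebra $B'_\epsilon A$ the operator $\xi_-$ annihilates every ($\tau$-weight-zero) input, so there $[\xi_-,\xi_+]=\xi_-\xi_+$. Evaluating on a generator gives $\xi_+(t_j)=[\tau,t_j]$ and then $\xi_-[\tau,t_j]=[g(\mathbf t),t_j]$, whence $\xi_1^2(t_j)=[-g(\mathbf t),t_j]$; as both sides are derivations, $\xi_1^2=[c,-]$ with $c$ the curvature, while the weight-lowering component of $[\xi,\xi]=0$, applied to $\tau$, yields $\xi_1(c)=0$. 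Carrying the Koszul and suspension signs through the dualisation turns $-g(\mathbf t)$ into the stated $-g(-\mathbf t)$; this sign bookkeeping, rather than any conceptual point, is the one genuinely fiddly part of the whole lemma.

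For (2), basis-independence is formal: given $\epsilon$ the maps $m_{\widebar A},d_{\widebar A},h_{\widebar A}$ are intrinsic, and a change of basis of $\widebar A$ dualises to a continuous algebra automorphism of $T'\Sigma^{-1}\widebar A^*$ intertwining the two differentials $\xi_1$. For independence of the retraction I fix a common basis of $(k\cdot 1)^{\perp}=\Sigma^{-1}\widebar A^*$ (legitimate by the previous sentence) and use that $(T'\Sigma^{-1}A^*,\xi)$ is one and the same for $\epsilon$ and $\epsilon'$, while $\tau-\tau'=b_{\epsilon-\epsilon'}\in\Sigma^{-1}\widebar A^*$ is the degree-one, $\tau$- and $\tau'$-free element corresponding to $\epsilon-\epsilon'$. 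Substituting $\tau=\tau'+b_{\epsilon-\epsilon'}$ into $\xi(t_i)=[\tau,t_i]+\xi_1^{\epsilon}(t_i)$ and projecting onto the $\tau'$-free part gives $\xi_1^{\epsilon'}=\xi_1^{\epsilon}+[b_{\epsilon-\epsilon'},-]$, so the two differentials differ by a twist; computing the $\tau$-free part of $\xi(b_{\epsilon-\epsilon'})$ from $\xi(\tau)=g(\mathbf t)+\tau^2$ relates the curvatures by $h^{\epsilon'}=h^{\epsilon}+\xi_1^{\epsilon}(b_{\epsilon-\epsilon'})+b_{\epsilon-\epsilon'}^2$. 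By the twisting construction recalled before the lemma this is exactly the statement that $(\id,b_{\epsilon-\epsilon'})$ is a curved isomorphism between $B'_\epsilon A$ and $B'_{\epsilon'}A$.

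For (3), the functor is the construction above on objects. A morphism $(\phi,\beta)\colon A\to A'$ in $\CDG$ dualises to $\phi^*\colon (A')^*\to A^*$, which extends to a continuous algebra homomorphism $T'\Sigma^{-1}(A')^*\to T'\Sigma^{-1}A^*$; restricting to the reduced subalgebras and absorbing $\beta$ together with any mismatch between the two fake augmentations into a single degree-one element, exactly as in (2), produces the candidate curved morphism $B'_{\epsilon'}A'\to B'_\epsilon A$. The two curved-morphism equations for it are the duals of the defining equations of $(\phi,\beta)$ and are verified by the same $\tau$-weight decomposition, while the identity and composition axioms follow from functoriality of $(-)^*$ and of $T'$. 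I expect the real obstacles to be twofold: first, the sign discipline in (1) required to land on $-g(-\mathbf t)$ and to keep one consistent convention throughout; and second, in (3), the reduction of a non-strict morphism between algebras carrying incompatible retractions to the twisting picture of (2). Neither is deep, but both demand careful accounting.
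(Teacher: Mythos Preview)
Your proposal is correct and follows the same core strategy as the paper---extract everything from $\xi^2=0$ on the non-reduced bar construction---but organises the details somewhat differently. For part~(1) the paper uses the two-term decomposition $\xi=\xi_1+\xi_2$ (your $\xi_2=\xi_-+\xi_+$) and computes $\xi_1^2(t_k)=-[\xi_1,\xi_2](t_k)-\xi_2^2(t_k)$ by inspecting which terms of $[\xi_1,\xi_2](t_k)$ and $\xi_2^2(t_k)$ are $\tau$-free; your explicit $\tau$-weight grading packages this more systematically and also isolates the $d(h)=0$ check from the weight-$(-1)$ component, which the paper does not spell out. For part~(2) the paper works directly from the formulas~(\ref{eq:fakeaug}) for $m_{\widebar A}^{\epsilon}$ and $d_{\widebar A}^{\epsilon}$ to see how they transform under $\epsilon\mapsto\epsilon'$, whereas you substitute $\tau=\tau'+b_{\epsilon-\epsilon'}$ inside the single retraction-independent ambient object $(T'\Sigma^{-1}A^*,\xi)$; these are dual descriptions of one calculation. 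For part~(3) the paper's route is slicker than yours: it regards objects of $\CDG$ as carrying a chosen retraction and factors an arbitrary morphism as a change of retraction followed by a retraction-preserving map, so functoriality reduces immediately to part~(2) together with the evident case; your direct construction from $\phi^*$ and $\beta$ works but involves verifying composition compatibilities that the factorisation sidesteps.
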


\begin{proof}
Taking into account that $0=\xi^2=\xi_1^2+[\xi_1,\xi_2]+\xi_2^2$ we have for $k\in I$,
\[
\xi_1^2(t_k)=-[\xi_1,\xi_2](t_k)-\xi_2^2(t_k).
\]
Furthermore, a straightforward calculation shows that $[\xi_1,\xi_2](t_k)$ has no terms depending on $t_i$, $i\in I$ whereas
the only term of $\xi_2^2(t_k)$ depending on $t_i$, $i\in I$ has the form $g(\mathbf t)\partial_\tau([t_k,\tau])=(-1)^{|t_k|}[t_k,g(\mathbf t)]$. It follows that
\[
\xi_1^2(t_k)=-(-1)^{|t_k|}[g(\mathbf t),t_k]
\]
as required.

Next, the statement about the independence of $B_{\epsilon}^\prime(A)$ on a basis in $\widebar{A}$ is obvious. Let $\epsilon' \colon A\to k$ be another fake augmentation; then formulas (\ref{eq:fakeaug}) show that $h$ is unchanged whereas 
$m_{\widebar{A}}^{\epsilon^\prime}(\widebar{a},\widebar{b})=m_{\widebar{A}}^{\epsilon}(\widebar{a},\widebar{b})+(\epsilon-\epsilon^\prime)(\widebar{a}\widebar{b})$,
and similarly for the differential.
This implies that $B^{\prime}_{\epsilon^{\prime}}A$ is obtained from $B^{\prime}_{\epsilon}A$ by twisting with the element $\epsilon-\epsilon^\prime\in B^{\prime}_{\epsilon}A$, which is equivalent to the stated claim.

To see that the construction $A\to B^\prime_\epsilon A$ is functorial, we will view an object in $\CDG$ as a curved dg algebra $A$ \emph{with a choice of a retraction} $A\to k$, however morphisms need not respect the retraction; this is clearly the same as (or, more accurately, equivalent to) the category $\CDG$. 
Any map $A\to B$ in $\CDG$ can canonically be factorised in $\CDG$ as $A\to A\to B$ with the first map being a change of retraction in $A$ followed by a map preserving retractions. 
The construction $B^\prime_{\epsilon} A$ is clearly functorial with respect to retraction-preserving maps and a change of retraction is also functorial by part (\ref{lem:semicomplete2}). 
\end{proof}

This allows us to define the extended bar construction of a curved dg algebra in the same way as it was done in the uncurved case; from now on we will suppress the subscript $\epsilon$ and write $B^\prime_{\epsilon}A$ for the semi-complete bar construction of $A$; by \cref{lem:semicomplete} this specifies a curved pseudocompact dg algebra up to a natural isomorphism.

\begin{defn}Let $A$ be a curved dg algebra with a retraction $\epsilon \colon A \to k$.
The \emph{extended bar construction} of $A$ is the graded pseudocompact algebra 
\[
\widecheck{B}A \coloneqq \widecheck{T}\Sigma^{-1}\widebar{A}^*.
\]
Then by \cref{pcfree}(1), the identity on $\widecheck{T}\Sigma^{-1}\widebar{A}^*$ induces a map $i \colon B'A \cong T'(\Sigma^{-1}\widebar{A}^*) \to \widecheck{B}A\cong\widecheck{T}(\Sigma^{-1}\widebar{A}^*)$, and we define the differential $d_{\widecheck{B}A}$ on $\widecheck{B}A$ to be
\[
d_{\widecheck{B}A}:=i \circ \xi_1 \colon \Sigma^{-1}\widebar{A}^* \to T'(\Sigma^{-1}\widebar{A}^*) \to \widecheck{T}(\Sigma^{-1}\widebar{A}^*).
\] The curvature of $\widecheck{B}A$ is the image of the curvature in $B^\prime A$ under the map $i \colon B'A \to \widecheck{B}A$. 
This gives $\widecheck{B}A$ the structure of a curved pseudocompact dg algebra.
\end{defn}

\begin{rem}
It follows from Lemma \ref{lem:semicomplete} that the correspondence $A\mapsto \widecheck{B}A$ is a functor $\CDG\to \pcCDG^{\op}$. 
A version of the definition above with $\widehat{T}\Sigma^{-1}\widebar{A}^*$ (the \emph{local} pseudocompact bar construction of a curved non-augmented algebra) in place of $\widecheck{T}\Sigma^{-1}\widebar{A}^*$ is found in \cite[Section 6.1]{Pos11}, albeit formulated in the language of coalgebras. 
However Positselski's local bar construction is \emph{not} functorial with respect to non-strict maps in $\CDG$ since maps between pseudocompact algebras of the form $\widehat{T}\Sigma^{-1}\widebar{A}^*$ must preserve their maximal ideals whereas this is not true for pseudocompact algebras of the form $\widecheck{T}\Sigma^{-1}\widebar{A}^*$ (which can have many maximal ideals). 
\end{rem}

Now recall that given a pseudocompact curved dg algebra $C$ there is defined a curved dg algebra
\[
\Omega C \coloneqq T\Sigma^{-1}\widebar{C}^*
\]
with $\widebar{C}:=C/k$, cf.~\cite[Section 6.1]{Pos11}.
Note that the definition of $\Omega$ can be given along the lines of the definition of $\widecheck{B}$, only simpler since there is no analogue, or need, for an intermediate step involving the semi-complete bar construction. 
Then we have the following result.

\begin{prop}
The correspondence $C \mapsto \Omega(C)$ determines a functor\/ $\pcCDG^{\op}\to\CDG$. 
This functor is left adjoint to $\widecheck{B} \colon \CDG\to \pcCDG^{\op}$. 
\end{prop}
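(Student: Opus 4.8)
The plan is to prove the curved, non-augmented analogue of the representability statement in \cref{prop:repres}: for a pseudocompact curved dg algebra $C$ and a curved dg algebra $A$ (each equipped with a chosen retraction onto $k$), I will produce natural bijections
\[
\Hom_{\CDG}(\Omega C, A) \cong \MC(A \otimes C) \cong \Hom_{\pcCDG}(\widecheck{B}A, C),
\]
where $A \otimes C$ carries its curved dg structure $d_{A\otimes C} = d_A \otimes 1 + 1 \otimes d_C$, $h_{A\otimes C} = h_A \otimes 1 + 1 \otimes h_C$. Since $\MC(A \otimes C)$ is manifestly intrinsic (independent of the retractions) and the composite bijection reads $\Hom_{\CDG}(\Omega C, A) \cong \Hom_{\pcCDG^{\op}}(C, \widecheck{B}A)$, this simultaneously exhibits the adjunction. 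Functoriality of $\widecheck{B}$ is already \cref{lem:semicomplete}; functoriality of $\Omega$ either follows verbatim from the argument there (factoring a non-strict map through a change of retraction) or, a posteriori, from naturality of the right-hand bijection in $C$ via Yoneda.

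For the cobar bijection, I would first forget differentials and curvature. A curved morphism $(f,b)\colon \Omega C \to A$ consists of a graded algebra map $f$ together with $b \in A^1$. Because $\Omega C = T\Sigma^{-1}\widebar{C}^*$ is free as a graded algebra, $f$ is the same as a linear map $\Sigma^{-1}\widebar{C}^* \to A$, which by continuous duality is a degree-one element $x \in (A \otimes \widebar{C})^1$. Writing $A \otimes C = A \oplus (A \otimes \widebar{C})$, the pair $(x,b)$ assembles into a single degree-one element $X = b \otimes 1_C + x \in (A \otimes C)^1$, and this assignment is a bijection on underlying data. It then remains to check that the two defining equations of a curved morphism---compatibility of $f$ with the differential up to the inner derivation $[b,-]$, and the curvature equation $f(h_{\Omega C}) = h_A + d_A b + b^2$---are together equivalent to the single Maurer--Cartan equation $h_{A\otimes C} + d_{A\otimes C}X + X^2 = 0$. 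This is the curved counterpart of the computation for the ordinary bar--cobar duality in \cite[Proposition 2.2]{cl11}; the $\tau$-derivation terms in the differential of $\Omega C$ (those encoding the unit and the ``fake augmentation'' of $C$) are precisely what feeds the curvature $h_C$ and the $b$-dependent terms into the correct components of the equation.

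The bar bijection is dual. By \cref{pcfree}(1), a graded pseudocompact algebra map $\widecheck{B}A = \widecheck{T}\Sigma^{-1}\widebar{A}^* \to C$ is the same as a continuous linear map $\Sigma^{-1}\widebar{A}^* \to C$, hence again a degree-one element of $(A \otimes C)^1$ once the twisting element is adjoined; compatibility with the differentials and curvatures translates into the same Maurer--Cartan equation, yielding the same element $X$. The one point requiring genuine care is the curvature: by \cref{lem:semicomplete} the curvature of $\widecheck{B}A$ is the image under $i \colon B'A \to \widecheck{B}A$ of $-g(-\mathbf t)$, and one must verify that under the correspondence $X$ this matches the curvature $h_A \otimes 1 + 1 \otimes h_C$ of $A \otimes C$. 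Finally, naturality of both bijections in $A$ and in $C$ is a direct check from the constructions, and compatibility with the change-of-retraction isomorphisms of \cref{lem:semicomplete} guarantees that the bijections descend to the retraction-independent set $\MC(A \otimes C)$.

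The main obstacle is this curvature bookkeeping on the bar side. Whereas in the uncurved augmented case (\cref{prop:repres}) the element $x$ lives in the reduced part $\widebar{A} \otimes \widebar{C}$ and no curvature or twisting element intervenes, here the extended bar construction is assembled through the semi-complete intermediate $B'A$ whose curvature $-g(-\mathbf t)$ is only implicitly described, the morphisms are non-strict, and everything must be shown independent of the fake augmentation. The degree and duality identifications, by contrast, are routine generalisations of the uncurved proof; the real work is to confirm that the curvature term of $A \otimes C$ is reproduced exactly, uniformly over the change-of-retraction isomorphisms.
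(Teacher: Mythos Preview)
Your proposal is correct and follows essentially the same route as the paper: the paper's proof simply asserts that functoriality of $\Omega$ follows as in \cref{lem:semicomplete} (or from \cite[Section~6.1]{Pos11}), and that the adjunction holds because both $\Hom_{\CDG}(\Omega C,A)$ and $\Hom_{\pcCDG}(\widecheck{B}A,C)$ are naturally in bijection with $\MC(A\otimes C)$. Your write-up spells out the curvature bookkeeping that the paper leaves implicit, but the strategy and the key identifications are the same.
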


\begin{proof}
The functoriality of $\Omega C$ was explained in \cite[Section 6.1]{Pos11}, alternatively the arguments in the proof of Lemma \ref{lem:semicomplete} apply with obvious modifications. 
The adjointness follows as in the non-curved case; namely by noticing that for $A\in \CDG$, $C\in\pcCDG$ the sets of morphisms $\Hom_{\CDG}(\Omega C,A)$ and $\Hom_{\pcCDG}(\widecheck{B}A,C)$ are both naturally isomorphic to $\MC(A\otimes C)$.
\end{proof}

\begin{rem}
If a curved dg algebra $A$ is happens to be augmented, then there is a natural choice of a retraction $\epsilon\colon A\to k$, namely, the given augmentation. 
In this case, $\widecheck{B}A$ is uncurved. 
Similarly, if $A$ has vanishing curvature, $\widecheck{B}A$ is naturally augmented. 
If $A$ is both augmented and uncurved, then so is $\widecheck{B}A$.
\end{rem}

Now for a curved dg algebra $A$ and its bar construction $\widecheck{B}A$, there is an adjunction
\begin{equation}\label{eq:adj}
G \colon \pcDGMod \widecheck{B}A^{\op} \rightleftarrows \DGMod A \lon F
\end{equation}
as defined in \cref{functorsFG}; these functors are well-defined as the twisting of a curved dg algebra by a Maurer--Cartan element gives an uncurved dg algebra. 
Furthermore, Theorem \ref{thm:modelcat} holds (with the same definitions of weak equivalences, fibrations and cofibrations) when the dg coalgebra $C$ is curved (indeed, this is how it was formulated in \cite{Pos11}). 
Thus, $\pcDGMod \widecheck{B}A^{\op}$ has the structure of a model category and by transferring along the adjunction (\ref{eq:adj}) we obtain the following generalisation of \cref{modcat}; the arguments are the same as in the uncurved case. 

\begin{thm}
Let $A$ be a curved dg algebra.
There is a cofibrantly generated model category structure on\/ $\DGMod A$, where 
\begin{enumerate}
\item a morphism $f \colon M \to N$ is a \emph{weak equivalence} if it induces a quasi-isomorphism
\[
\Hom_A((V \otimes A)^{[x]}, M) \to \Hom_A((V \otimes A)^{[x]}, N)
\]
for any finitely generated twisted $A$-module $(V \otimes A)^{[x]}$;
\item a morphism is a \emph{fibration} if it is surjective;
\item a morphism is a \emph{cofibration} if it has the left lifting property with respect to acyclic fibrations.
\end{enumerate}
With this model structure, the adjunction\/ $G \dashv F$ is a Quillen pair.
\end{thm}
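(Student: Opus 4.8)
The plan is to deduce this theorem from the transfer principle exactly as in the proof of \cref{modcat}, transferring the Positselski model structure on $\pcDGMod\widecheck{B}A^{\op}$ along the adjunction \eqref{eq:adj}. The starting point is that the $k$-linear dual of the pseudocompact curved dg algebra $\widecheck{B}A$ is a curved dg coalgebra, so by the curved version of \cref{thm:modelcat} the category $\pcDGMod\widecheck{B}A$ (equivalently, curved dg comodules over $(\widecheck{B}A)^*$) carries a cofibrantly generated model structure of the second kind, with generating cofibrations the injections of finite-dimensional comodules. This supplies the cofibrantly generated source model category for the transfer, with $L=G$ and $R=F$, just as in the uncurved case.

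Next I would re-establish the analogues of \cref{wepreserved} and \cref{fibrations} in the curved setting, so that the transferred weak equivalences and fibrations on $\DGMod A$ are precisely those in the statement. The crucial observation, recorded just before \eqref{eq:adj}, is that the explicit formulas $FM=(M^*\otimes\widecheck{B}A)^{[\xi]}$ and $GN=(N^*\otimes A)^{[\xi]}$ are unchanged, and that twisting the curved algebra $A\otimes\widecheck{B}A$ by the Maurer--Cartan element $\xi$ yields an \emph{uncurved} dg algebra. Consequently the complexes $\Hom_A((V\otimes A)^{[x]},M)$ and $\Hom_{\widecheck{B}A}(FM,V)$ computing weak equivalences are honest square-zero complexes, and the arguments of \cref{wepreserved} and \cref{fibrations} — which use only the adjunction, the bimodule twisting, and preservation of exactness under $-\otimes V$ and $\Hom(V,-)$ for finite-dimensional $V$ — carry over verbatim. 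In particular $F$ sends a surjection $g\colon M\to N$ (so $M\cong N\oplus V$) to the injection $FN\to FM$ with cokernel $(V^*\otimes\widecheck{B}A)^{[\xi]}$, identifying transferred fibrations with surjections.

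Finally I would verify the two hypotheses of the transfer principle. Condition (\ref{transfer-small}) holds because $G$, being a left adjoint assembled from dualisation and $-\otimes A$, preserves small objects. For condition (\ref{transfer-functorial}): every object of $\DGMod A$ is fibrant since fibrations are surjections, so the identity is a functorial fibrant replacement, and it remains to produce a functorial path object. I would again take $M\otimes I$ with $I=k\oplus\Sigma^{-1}k\oplus k$ the standard acyclic interval, factoring the diagonal as $M\xrightarrow{e}M\otimes I\xrightarrow{(p_1,p_2)}M\oplus M$; here $(p_1,p_2)$ is surjective hence a fibration, and since $I$ is acyclic $e$ induces a quasi-isomorphism on each $\Hom_A((V\otimes A)^{[x]},-)$ and so is a weak equivalence.

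The one point requiring genuine (if routine) care — and the main potential obstacle — is checking that these constructions produce legitimate \emph{curved} modules, i.e.\ that each still satisfies $d^2(-)=(-)h$. For the path object this comes down to the fact that $I$ is uncurved with $d_I^2=0$, so the cross terms in $(d_M\otimes 1\pm 1\otimes d_I)^2$ cancel and $M\otimes I$ inherits exactly the curvature of $M$; the same bookkeeping confirms that $FM$, $GN$ and the generating (co)fibrations remain well-defined curved (co)modules over the relevant algebras, the acting algebra being uncurved after twisting by $\xi$. Once these compatibilities are in place, the transfer principle produces the asserted model structure and simultaneously makes $G\dashv F$ a Quillen pair, completing the proof.
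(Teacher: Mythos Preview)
Your proposal is correct and follows exactly the approach the paper intends: the paper's own proof is the single sentence that the arguments are ``the same as in the uncurved case,'' and you have faithfully unpacked what this means, including the transfer along \eqref{eq:adj}, the curved analogues of \cref{wepreserved} and \cref{fibrations}, and the path-object construction via $M\otimes I$. Your additional paragraph verifying that $M\otimes I$ and the other constructions retain the correct curvature condition $d^2(-)=(-)h$ is a welcome elaboration that the paper leaves implicit.
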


Similarly, there are model structures on $\DGMod A$ when $A$ is curved and augmented, or non-curved and non-augmented. 
Altogether there are four cases as below. 
Case (\ref{prevcase}) is the case considered previously and proved in \cref{Qequiv}. Again, the arguments employed in the augmented uncurved case generalise in a straightforward fashion.

\begin{thm}
With the above model structures, the functors\/ $G \dashv F$ form a Quillen anti-equivalence between the categories\/ $\pcDGMod \widecheck{B}A$ and\/ $\DGMod A$ in each of the following four cases:
\begin{enumerate}
\item $A$ is curved and non-augmented, $\widecheck{B}A$ is curved and non-augmented; 
\item $A$ is curved and augmented, $\widecheck{B}A$ is non-curved and non-augmented; 
\item $A$ is non-curved and non-augmented, $\widecheck{B}A$ is curved and augmented; 
\item\label{prevcase} $A$ is non-curved and augmented, $\widecheck{B}A$ is non-curved and augmented. 
\end{enumerate}
\end{thm}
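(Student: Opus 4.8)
The plan is to show that the proof of \cref{Qequiv} extends to all four cases essentially word for word, since its only structural inputs --- the bimodule resolution of \cref{pcTVresolution}, the cofibrancy of the twisted free modules $F(M)$, and the adjunction $G \dashv F$ --- survive unchanged in the curved and non-augmented settings. Case (\ref{prevcase}) is \cref{Qequiv} itself. In each remaining case the model structures on both sides and the fact that $G \dashv F$ is a Quillen pair have already been recorded above, so it suffices to prove that the two counit maps $GFM \to M$ and $FGN \to N$ are weak equivalences; these then upgrade automatically to a Quillen anti-equivalence.

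First I would dispose of the curvature. By construction the canonical element $\xi \in \MC(A \otimes \widecheck{B}A)$ satisfies the curved Maurer--Cartan equation $h + d\xi + \xi^2 = 0$, and as noted after \cref{functorsFG} the twisting of a curved dg algebra by a Maurer--Cartan element is uncurved; thus $F$ and $G$ are well-defined and the twisted differentials on $FM = (M^* \otimes \widecheck{B}A)^{[\xi]}$ and $GN = (N^* \otimes A)^{[\xi]}$ genuinely square to zero where required. The non-augmented cases are accommodated by the fake augmentation $\epsilon \colon A \to k$: \cref{lem:semicomplete} shows that $\widecheck{B}A$ is a well-defined curved pseudocompact dg algebra independent of the choice of retraction and basis, while the splitting $A \cong k \oplus \widebar{A}$ furnishes the space $\Sigma^{-1}\widebar{A}^*$ used to define $F$, $G$ and the resolutions below.

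Next I would reconstruct the two resolutions underlying \cref{Qequiv}. For a $\widecheck{B}A$-module $N$ the object $\widecheck{B}A \otimes \Sigma^{-1}\widebar{A}^* \otimes N$ is modelled on the bimodule resolution of \cref{pcTVresolution}; crucially, that resolution is a statement about the underlying graded algebra $\widecheck{T}\Sigma^{-1}\widebar{A}^* = \widecheck{B}A$ and makes no reference to a differential or curvature, so it is available verbatim. It follows exactly as before that $FGN$ is the two-term resolution of $N$ produced by \cref{pcTVresolution}, whence $FGN \to N$ is a weak equivalence. Dually, the twisted free module $FM$ is cofibrant, so applying $G$ to it recovers $M$ after cofibrantly replacing $M$ by a twisted free module $M \otimes V$, giving that $GFM \to M$ is a weak equivalence.

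The one point demanding genuine care --- and hence the main obstacle --- is verifying that the differentials on these resolutions remain compatible once the curvature terms are switched on. Concretely, one must confirm that the curvature contribution $\sum_{i} a_i \partial_{t_i}$ and the component $g(\mathbf t)$ appearing in $\xi_1$ (cf.~\cref{lem:semicomplete} and the formulas \eqref{eq:fakeaug}) enter $d_{\widecheck{B}A}$ so that the Maurer--Cartan twisting by $\xi$ still cancels the total curvature of $A \otimes \widecheck{B}A$; this is the curved analogue of the computation that $\xi$ is a Maurer--Cartan element in \cref{prop:repres}, and is a direct manipulation of \eqref{eq:fakeaug}. Once this compatibility is established, the homological arguments of \cref{Qequiv} apply verbatim and yield the Quillen anti-equivalence in all four cases.
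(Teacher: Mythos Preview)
Your proposal is correct and follows exactly the approach the paper takes: the paper's own proof consists of the single remark that ``the arguments employed in the augmented uncurved case generalise in a straightforward fashion,'' and you have simply unpacked what that generalisation entails. If anything, your discussion of why \cref{pcTVresolution} transports unchanged (being a statement about the underlying graded algebra) and of the role of the fake augmentation via \cref{lem:semicomplete} makes explicit the checks the paper leaves implicit.
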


%\bibliographystyle{abbrvurl}
%\bibliography{references}

\begin{thebibliography}{10}

\bibitem{AJ13}
M.~Anel and A.~Joyal.
\newblock Sweedler theory for (co)algebras and the bar-cobar constructions.
\newblock \href {http://arxiv.org/abs/1309.6952} {\path{arXiv:1309.6952}}.

\bibitem{bec14}
H.~Becker.
\newblock Models for singularity categories.
\newblock {\em Adv. Math.}, 254:187--232, 2014.

\bibitem{BGS96}
A.~Beilinson, V.~Ginzburg, and W.~Soergel.
\newblock Koszul duality patterns in representation theory.
\newblock {\em J. Amer. Math. Soc.}, 9(2):473--527, 1996.

\bibitem{bm03}
C.~Berger and I.~Moerdijk.
\newblock Axiomatic homotopy theory for operads.
\newblock {\em Comment. Math. Helv.}, 78(4):805--831, 2003.

\bibitem{Blo10}
J.~Block.
\newblock Duality and equivalence of module categories in noncommutative
  geometry.
\newblock In {\em A celebration of the mathematical legacy of {R}aoul {B}ott},
  volume~50 of {\em CRM Proc. Lecture Notes}, pages 311--339. Amer. Math. Soc.,
  Providence, RI, 2010.

\bibitem{chl}
J.~Chuang, J.~Holstein, and A.~Lazarev.
\newblock {M}aurer-{C}artan moduli and theorems of {R}iemann-{H}ilbert type.
\newblock \href {http://arxiv.org/abs/1802.02549v1}
  {\path{arXiv:1802.02549v1}}.

\bibitem{cl11}
J.~Chuang and A.~Lazarev.
\newblock {$L$}-infinity maps and twistings.
\newblock {\em Homology, Homotopy and Applications}, 13(2):175--195, 2011.

\bibitem{gg99}
E.~Getzler and P.~Goerss.
\newblock A model category structure for differential graded coalgebras.
\newblock Unpublished manuscript, 1999.

\bibitem{GiK94}
V.~Ginzburg and M.~Kapranov.
\newblock Koszul duality for operads.
\newblock {\em Duke Math. J.}, 76(1):203--272, 1994.

\bibitem{gua}
A.~Guan.
\newblock Gauge equivalence for complete {$L_{\infty}$}-algebras.
\newblock \href {http://arxiv.org/abs/1807.11932} {\path{arXiv:1807.11932}}.

\bibitem{Hin01}
V.~Hinich.
\newblock D{G} coalgebras as formal stacks.
\newblock {\em J. Pure Appl. Algebra}, 162(2-3):209--250, 2001.

\bibitem{hov99}
M.~Hovey.
\newblock {\em Model categories}, volume~63 of {\em Mathematical Surveys and
  Monographs}.
\newblock American Mathematical Society, Providence, RI, 1999.

\bibitem{KLN10}
B.~Keller, W.~Lowen, and P.~Nicol\'{a}s.
\newblock On the (non)vanishing of some ``derived'' categories of curved dg
  algebras.
\newblock {\em J. Pure Appl. Algebra}, 214(7):1271--1284, 2010.

\bibitem{Pos11}
L.~Positselski.
\newblock Two kinds of derived categories, {K}oszul duality, and
  comodule-contramodule correspondence.
\newblock {\em Mem. Amer. Math. Soc.}, 212(996):vi+133, 2011.

\bibitem{Pos17}
L.~Positselski.
\newblock Coherent rings, fp-injective modules, dualizing complexes, and
  covariant {S}erre-{G}rothendieck duality.
\newblock {\em Selecta Math. (N.S.)}, 23(2):1279--1307, 2017.

\bibitem{Qui69}
D.~Quillen.
\newblock Rational homotopy theory.
\newblock {\em Ann. of Math. (2)}, 90:205--295, 1969.

\bibitem{Swe69}
M.~E. Sweedler.
\newblock {\em Hopf algebras}.
\newblock Mathematics Lecture Note Series. W. A. Benjamin, Inc., New York,
  1969.

\end{thebibliography}

\setlength{\parindent}{0pt}
\end{document}